\documentclass[10pt]{article}
\usepackage[vmargin=2.5cm]{geometry}
\usepackage{graphicx} 
\usepackage{amssymb}
\usepackage{amsthm}
\usepackage{amsmath}
\usepackage{mathbbol}
\usepackage{setspace}
\usepackage{parskip}
\usepackage{hyperref}
\usepackage{microtype}
\allowdisplaybreaks
\hypersetup{colorlinks=true,
linkcolor=blue,
citecolor=blue,
urlcolor=cyan}
\newtheorem{theorem}{Theorem}[section]
\newtheorem{lemma}{Lemma}[section]
\newtheorem{remark}{Remark}[section]
\newtheorem{conjecture}{Conjecture}[section]
\title{An Integral Mean Value Theorem for Weyl Sums over Broken Arcs}
\author{YaoJie Guo}
\date{June 2026}
\begin{document}
\maketitle
\begin{abstract}
In this article, we research the mean value of integral of exponential sum $S(\alpha)=\sum_{u\in I}e(\alpha u^k)$, where $I$ is a short interval whose length is $2N^{\theta},\theta<1$, and the broken arc $\mathfrak{m^*}$ is a subset of a following minor arcs
\[
\mathfrak{m}=\bigcap_{j\leq k-1}\left\{\alpha:\forall q<(\log N)^A,h<q,(h,q)=1,\Big|\alpha-\frac{h}{q}\Big|>\frac{1}{qN^{(k-j-1/2)\theta}}\right\}
\]
which has measure at least $c>0$. By setting $m$ is a sufficiently large number, $N$ is be sufficiently large in terms of $m$. When $k>\log m$ and $\frac{\log k}{\log m}<1/2$ we set the following estimate:
\[
\int_{\mathfrak{m^*}}\bigg|\sum\limits_{{N_1}<u<{N_2}}{e(zu^k)}\bigg|^{m}\mathrm{d}z\ll_cN^{\theta m(\frac{2k+1}{2k+2}+o(1))}
\]
We can find this estimate moving beyond the even-odd restriction of powers. To get this bound, we first set a strong estimate for almost $\alpha$ by Diophantine approximation and Vinogradov's main value theorem. Then, combining this result, we construct a refined Weyl differencing argument by partitioning the differences step into large and small range, which significantly outperforms the classical one. By the version of probability, we can calculate the multiplicity of each sum. Put them together and we can complete the proof.
\end{abstract}
\section{The Introduction}
In this paper we will research the mean value theorem of exponential sum. Here we set $e(*)=e^{2\pi i*}$, and $\textbf{a}=(a_1,...,a_k)$.  The integral
\begin{equation}
I_1=\int_{[0,1]^k}\bigg|\sum\limits_{u\leq N}e(a_ku^k+...+a_1u)\bigg|^{s}\mathrm{d}\textbf{a}
\end{equation}
and the more direct version
\begin{equation}
I_2=\int_{[0,1]}\bigg|\sum\limits_{u\leq N}e(\alpha u^k)\bigg|^{s}\mathrm{d}\alpha
\end{equation}
can be seen as the mean value of $s-th$ moment of he integral of high-degree of Weyl sums. The best result so far are all concentrated in first case. One of the most important results is the Vinogradov's Main Conjecture:
\[
\int_{[0,1]^k}\bigg|\sum\limits_{u\leq N}e(a_ku^k+...+a_1u)\bigg|^{s}\mathrm{d}\textbf{a}\ll N^{\epsilon}(N^{s-k(k+1)/2})+N^{s/2})
\]
and there is a famous result according to \cite{wooley2015discretefourierrestrictionefficient}:
\[
\int_{[0,1]}\bigg|\sum\limits_{u\leq N}e(\alpha u^k)\bigg|^{s}\mathrm{d}\alpha\ll N^{\epsilon}(N^{s-k}+N^{s/2})
\]
Here we need $s$ is a even number. The conjecture has been proved by \cite{bourgain2016proofmainconjecturevinogradovs} for degree $k>3$, and \cite{wooley2014cubiccasemainconjecture} for degree $k=3$. Another form is seldom to research, we can see the smooth form in \cite{bruedern2024estimatessmoothweylsums} 
Base on these theorem, we can see some different directions of current research.\\
\textbf{Weighted Sums.} This direction is to research the weighted sum or its mean value, like
\[
\int_{[0,1]^k}\bigg|\sum\limits_{u\leq N}f(u)e(a_ku^k+...+a_1u)\bigg|^{s}\mathrm{d}\textbf{a}
\]
where $|f(u)|$ can be $1$(the usual situation), $0$(the indicator function, to restrict in short interval) or other multiple function. Some current research could be found in \cite{chen2020metrictheoryweylsums} for the measure and point-conclusion, and \cite{wooley2015discretefourierrestrictionefficient}, for the mean value conclusion, the application could be found in \cite{wei2014sumspowersequalprimes}.
Moreover, here we can find the generation of Vinogradov's main conjecture:
\[
\int_{[0,1]^k}\bigg|\sum\limits_{|u|\leq N}f(u)e(a_ku^k+...+a_1u)\bigg|^{2s}\mathrm{d}\textbf{a}\ll N^{\epsilon}(1+N^{s-k(k+1)/2})(\sum\limits_{|n|\leq N}f(n)^2)^s
\]
where $\epsilon>0$ is any given positive number. By the way, we have not find any similar conjecture for the second form.\\
\textbf{Broken Arcs.} This direction turns the whole arcs to broken arcs. Some usual situation is the major arcs in circle method, Goldbach-Waring problem, like \cite{wei2014sumspowersequalprimes}. Some difficult versions can be found in \cite{oh2025extendedvinogradovsmeanvalue}, which establish the mean value theorem around the arcs near to $0$ in some components. Generally speaking, the results have following form:
\[
\int_{\mathfrak{D}}\bigg|\sum\limits_{u\leq N}e(a_ku^k+...+a_1u)\bigg|^{s}\mathrm{d}\textbf{a}\ll L(\mathfrak{D},N)N^{s/2+\epsilon}
\]
or the single point version:
\[
\max_{\textbf{a}\in\mathfrak{D}}\bigg|\sum\limits_{u\leq N}e(a_ku^k+...+a_1u)\bigg|^{s}\ll L(\mathfrak{D},N)N^{s/2+\epsilon}
\]
where $L(\mathfrak{D},N)$ is rely on the structure of $\mathfrak{D}$. Current papers mainly discuss the situation that $\mathfrak{D}=[0,1)^{k-1}\times[0,N^{-\mu})$, and $L(\mathfrak{D},N)=N^{-\mu}$. For the general situation, it is still hard to give the result.\\
\textbf{The version of measure theory.}
Recently, the version of number theory appeared in the exponential sum. First it can be seen in \cite{wooley2015perturbationsweylsums}, and then we can see another example in \cite{chen2020metrictheoryweylsums}. The traditional ideal problem is: To find the bound
\[
\sum\limits_{u\leq N}e(a_ku^k+...+a_1u)\ll L(N)
\]
holds true for almost any given $\textbf{a}=(a_1,...,a_k)$. But the reality is, it seems hard to give a sharp bound for any sufficiently large $N$ without any condition add in $\alpha$, which caused the result cannot apply on the mean value theorem.\\
In this paper, we will combine the first and the second versions together. We set
\[
\mathfrak{m}=\bigcap_{j\leq k-1}\left\{\alpha:\forall q<(\log N)^A,h<q,(h,q)=1,\Big|\alpha-\frac{h}{q}\Big|>\frac{1}{qN^{(k-j-1/2)\theta}}\right\}
\]
which is the simple minor arc but not the minor arc in Waring Problem. Our main goal is not only to establish the mean value theorem for $\mathfrak{m}$, but also for any subset of $\mathfrak{m}$(We call it "Broken arc" in this paper). Then, we focus on the short interval, whose length is $CN^{\theta},\theta<1$. More exactly, we have,
\begin{theorem}
For the sufficiently large number $m$, $m^{1/2-\delta-\epsilon}>k>\log m$ and any arcs $\mathfrak{m^*}\subset\mathfrak{m}$ whose Lebesgue measure $\mathfrak{L}(\mathfrak{m^*})>c$, and $c$ is a constants not rely on $m,N$, we have
\[
\int_{\mathfrak{m^*}}\bigg|\sum\limits_{{N_1}<u<{N_2}}{e(zu^k)}\bigg|^{m}\mathrm{d}z\ll_cN^{\theta m(\frac{2k+1}{2k+2}+m^{-1/2-\delta})}
\]
where $\epsilon$ are any given positive number, $\theta<1$ and $\ll$ are a positive constants not rely on $m,\mathfrak{m}$.
\end{theorem}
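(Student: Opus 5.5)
The plan is the standard combination of a pointwise minor-arc bound with a mean value bound, arranged so that the small exponent $m^{-1/2-\delta}$ absorbs every loss. Write $S(z)=\sum_{N_1<u<N_2}e(zu^k)$ with $N_2-N_1\asymp N^{\theta}$, and put $P=N^{\theta}$. Since $\mathfrak{m^*}\subset\mathfrak{m}\subset[0,1)$, we can split
\[
\int_{\mathfrak{m^*}}|S(z)|^{m}\,\mathrm{d}z\le \Big(\sup_{z\in\mathfrak{m^*}\setminus\mathcal{E}}|S(z)|\Big)^{m-2s}\int_0^1|S(z)|^{2s}\,\mathrm{d}z+\int_{\mathcal{E}}|S(z)|^{m}\,\mathrm{d}z ,
\]
where $\mathcal{E}$ is a small exceptional set and $2s\approx m^{1/2}$ is an even integer. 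The main term is then governed by two estimates: a pointwise Weyl-type bound of the form $|S(z)|\ll P^{\frac{2k+1}{2k+2}+o(1)}$ for $z\in\mathfrak{m}$ outside $\mathcal{E}$, and a mean value bound $\int_0^1|S|^{2s}\ll P^{2s-k+\epsilon}+P^{s+\epsilon}$. The second follows from Vinogradov's mean value theorem, i.e.\ the Bourgain--Demeter--Guth result quoted in the introduction, after shifting $u\mapsto N_1+v$ with $v\le P$ and expanding $(N_1+v)^k$. Since $s\ll m^{1/2}$, the loss $P^{2s\cdot\frac{1}{2k+2}}$ from using the trivial-exponent portion is at most $P^{m\cdot O(m^{-1/2})}$. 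The hypothesis $k<m^{1/2-\delta-\epsilon}$ is what lets this loss fit inside the allowed $P^{m\cdot m^{-1/2-\delta}}$, once the exponents are balanced.

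\textbf{Pointwise bound.} Expanding $(N_1+v)^k=\sum_j\binom{k}{j}N_1^{k-j}v^j$ turns $S(z)$ into a degree-$k$ Weyl sum in $v\le P$ with coefficients $\beta_j=z\binom{k}{j}N_1^{k-j}$. The definition of $\mathfrak{m}$, with the intersection over $j\le k-1$ and denominators $qN^{(k-j-1/2)\theta}$, is built so that each relevant coefficient avoids major arcs at the scale that matters for a Weyl differencing of length $P$. I would perform $k-1$ differencing steps, splitting each difference $h_i$ into a small range $h_i\le H_i$ and a large range. Small differences are handled trivially. For large differences, the resulting linear sum is estimated by Diophantine approximation, taking a Dirichlet approximation of $k!h_1\cdots h_{k-1}z$. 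The minor-arc condition ensures that $q\ge(\log N)^A$ or that $|z-h/q|$ is large, which yields the saving. Optimizing the $H_i$ should replace the classical exponent $1-2^{1-k}$ by $\frac{2k+1}{2k+2}$.

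\textbf{Exceptional set.} The set $\mathcal{E}$ consists of those $z$ for which too many products $h_1\cdots h_{k-1}$ land near rationals with small denominator. I would bound its measure probabilistically, counting the multiplicity of each product via divisor bounds, by $P^{-Bk}$ for large $B$. Combined with $|S|\le P$, this gives $\int_{\mathcal{E}}|S|^m\le P^{m-Bk}$, but that only beats the target if $Bk\ge m/(2k+2)$. So instead I would iterate: on $\mathcal{E}$, apply the same decomposition with the $2s$-th moment restricted to $\mathcal{E}$ and use Hölder. Finally, the constant $c$ and the measure condition $\mathfrak{L}(\mathfrak{m^*})>c$ enter only through normalization, so they give the $\ll_c$.

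\textbf{Main obstacle.} The hardest step is the refined differencing that produces exactly $\frac{2k+1}{2k+2}$ uniformly on $\mathfrak{m}$, or almost uniformly. Classical Weyl differencing gives only $2^{1-k}$ savings, and Vinogradov-type pointwise bounds give savings of order $1/k^2$. A saving of $1/(2k+2)$ pointwise is stronger than what is known for general minor arcs, so this must genuinely rely on the tailored arcs $\mathfrak{m}$ together with discarding $\mathcal{E}$. I would first try to obtain the saving only on average, by applying Hölder to the whole $m$-th moment so that the differenced sums are integrated rather than bounded pointwise. This would reduce the problem to a mean value of a differenced system, to which the Vinogradov main conjecture applies directly.
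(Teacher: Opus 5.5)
\textbf{There is a genuine gap, and it cannot be closed: the inequality is false as stated (for fixed $\theta$ and $\delta\ge 0$).}

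Your plan needs two things: a bound $|S(z)|\ll P^{\frac{2k+1}{2k+2}+o(1)}$ on $\mathfrak{m}\setminus\mathcal{E}$, and $\int_{\mathcal{E}}|S|^m$ negligible. As you acknowledge, the first is only asserted (``optimizing the $H_i$ should\ldots''). The second fails. By Markov's inequality, the claimed bound would force $|S|\le P^{1-\frac{1}{4k+4}}$ outside a set of measure $P^{-\frac{m}{4k+4}+m^{1/2-\delta}}$. But $\mathfrak{m}$ only removes neighbourhoods of rationals with $q<(\log N)^A$.

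Here is an explicit obstruction. Take a prime $p$ with $(\log N)^{A/k}<p\le 2(\log N)^{A/k}$, put $q=p^k$ with $(a,q)=1$, and let $z=a/q+\beta$ with $|\beta|\le(100kN_2^{k-1}P)^{-1}$.
\begin{itemize}
\item These points lie in $\mathfrak{m}$. For $q'<(\log N)^A$ we have $|z-h/q'|\ge 1/(qq')-|\beta|>1/(q'N^{\theta/2})$, which is the strongest of the defining conditions.
\item Split $u$ into classes mod $q$ and compare each class sum with an integral. This gives $S(z)=q^{-1}S(q,a)v(\beta)+O(q)$.
\item Here $S(q,a)=\sum_{r\bmod q}e(ar^k/q)=p^{k-1}$, since $p\nmid ak$. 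Also $|v(\beta)|=\bigl|\int_{N_1}^{N_2}e(\beta x^k)\,\mathrm{d}x\bigr|\ge P$.
\item Hence $|S(z)|\ge P/(2p)\ge\frac14P(\log N)^{-A/k}$ on a set of measure $\gg_k N^{-(k-1)}P^{-1}$.
\end{itemize}
Taking $\mathfrak{m^*}=\mathfrak{m}$, which has measure $1-o(1)$, this gives
\[
\int_{\mathfrak{m^*}}|S(z)|^m\,\mathrm{d}z\gg N^{\theta(m-1)-(k-1)-o(1)}.
\]
This exceeds $N^{\theta m(\frac{2k+1}{2k+2}+m^{-1/2-\delta})}$ as soon as $\frac{\theta m}{2k+2}>k-1+\theta+\theta m^{1/2-\delta}$. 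Under $k<m^{1/2-\delta-\epsilon}$, that holds for every large $m$.

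So for large $m$ the moment is governed by major arcs whose denominators lie just above the cutoff $(\log N)^A$. On those arcs $|S|$ is essentially $P$, so no iteration on $\mathcal{E}$ can help. This is exactly the obstruction you ran into with $\int_{\mathcal{E}}|S|^m\le P^{m-Bk}$.

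Separately, your mean value input is not what you say it is. The bound $\int_0^1|S|^{2s}\ll P^{2s-k+\epsilon}+P^{s+\epsilon}$ follows from Vinogradov's theorem only for $s\ge k(k+1)/2$. Moreover, for a short interval the shift produces the single equation $\sum_j\binom{k}{j}N_1^{k-j}\sigma_j=0$, with $\sigma_j=\sum_i(v_i^j-w_i^j)$, not Vinogradov's system.

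\textbf{Comparison with the paper.} The paper takes a different route and never claims a pointwise $\frac{2k+1}{2k+2}$ bound.
\begin{itemize}
\item It proves the weaker pointwise bound $N^{\theta(1-\frac{1}{4(k+1)})+\epsilon}$ on all of $\mathfrak{m}$ (Lemma 2.2, via shifting and the main conjecture).
\item It then runs an ``efficient partition'' of Weyl differencing. After each squaring, pairs are split according to $|u_1^k-u_2^k|<K_j$ or $\ge K_j$. Small differences with good $z_j$ are bounded using Lemmas 2.2 and 3.1, and bad $z_j$ are discarded via Lemma 3.2.
\item It estimates the probability that differencing stops at each step (Lemmas 4.1--4.2) and optimizes $(A,y,n)$ in Lemma 6.1.
\end{itemize}
This does not fill your gap either:
\begin{itemize}
\item Lemma 2.2 is refuted by the same points $a/q+\beta$ used above.
\item Lemma 4.1 applies Markov's inequality to $\{|S|\le x\}$ instead of $\{|S|\ge x\}$.
\item Lemma 4.2 infers independence from the vanishing of moments taken without conjugates. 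Independence of the moduli needs moments with conjugates, which do not vanish; for example, $\mathbf{E}|S_j|^2$ equals the number of terms.
\end{itemize}
A correct version must either remove the major arcs with denominators well beyond $(\log N)^A$, or allow a right-hand side of size at least $N^{\theta(m-1)-(k-1)-o(1)}$.
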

and after section 2, we can give another theorem:
\begin{theorem}
For the sufficiently large number $m,k$ and any arcs $\mathfrak{m^*}\subset\mathfrak{m}$ whose Lebesgue measure $\mathfrak{L}(\mathfrak{m^*})>c$, and $c$ is a constants not rely on $m,N$, we have
\[
\int_{\mathfrak{m^*}}\bigg|\sum\limits_{{N-N^\theta}<u<{N+N^\theta}}{e(zu^k)}\bigg|^{m}\mathrm{d}z\ll N^{\theta(m-\frac{m}{4(k+1)})+\epsilon'}
\]
where $\epsilon'$ is any given positive number. $\theta<1$, and $C$ is a positive constants not rely on $m,\mathfrak{m}$ and $\mathfrak{m^*}$.
\end{theorem}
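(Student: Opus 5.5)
The bound to aim for is
\[
\int_{\mathfrak{m^*}}|S(z)|^{m}\,\mathrm{d}z\ll N^{\theta(m-\frac{m}{4(k+1)})+\epsilon'},\qquad S(z)=\sum_{N-N^\theta<u<N+N^\theta}e(zu^k),
\]
and the most direct route is the trivial splitting
\[
\int_{\mathfrak{m^*}}|S(z)|^{m}\,\mathrm{d}z\le \Big(\sup_{z\in\mathfrak{m}}|S(z)|\Big)^{m-2s}\int_0^1|S(z)|^{2s}\,\mathrm{d}z .
\]
The two factors are handled separately: a pointwise Weyl-type bound on the minor arc $\mathfrak{m}$ for the sup, and an even mean value for the integral. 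Since $\mathfrak{m^*}\subset\mathfrak{m}$, the measure condition $\mathfrak{L}(\mathfrak{m^*})>c$ only enters through the implied constant. In fact the bound will hold uniformly for every subset of $\mathfrak{m}$.

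\textbf{Step 1: the pointwise bound on $\mathfrak{m}$.} Write $u=N+v$ with $|v|<N^\theta$. Then
\[
zu^k=\sum_{j=0}^{k} z\binom{k}{j}N^{k-j}v^{j},
\]
so $S(z)$ is a Weyl sum of length $N^\theta$ in $v$ whose degree-$j$ coefficient is $\beta_j=z\binom{k}{j}N^{k-j}$. The definition of $\mathfrak{m}$ is built so that, for each $j\le k-1$, $z$ admits no rational approximation $h/q$ with $q<(\log N)^A$ and $|z-h/q|\le (qN^{(k-j-1/2)\theta})^{-1}$. Using Dirichlet's theorem, I would transfer this to the coefficient $\beta_j$ at the scale relevant to a sum of length $N^\theta$. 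Then I would apply the Vinogradov mean value theorem (Bourgain--Demeter--Guth / Wooley), in the form giving for a degree-$k$ polynomial with some coefficient on a minor arc
\[
\sum_{|v|<N^\theta}e(P(v))\ll N^{\theta(1-\sigma)+\epsilon},\qquad \sigma\approx\frac{1}{k(k-1)} .
\]
Where the approximation in Step 1 is too weak, the refined Weyl differencing from Section~2, with the differences split into large and small ranges, replaces this. The target is $\sup_{\mathfrak{m}}|S|\ll N^{\theta(1-\frac{1}{4(k+1)})+\epsilon}$, or at least a saving of order $1/k$ in the exponent.

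\textbf{Step 2: the even moment.} Counting solutions of $u_1^k+\dots+u_s^k=u_{s+1}^k+\dots+u_{2s}^k$ with all $u_i$ in the short interval, and using Hua's lemma or the efficient congruencing / decoupling bound applied to the shifted variable $v$, gives
\[
\int_0^1|S|^{2s}\,\mathrm{d}z\ll N^{\theta(2s-\kappa)+\epsilon}
\]
with $\kappa>0$ when $s$ is around $k(k+1)/2$. The short-interval structure helps here: after the shift, the equation dominates a Vinogradov system in $v$, since the binomial expansion couples the lower degrees.

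\textbf{Step 3: combining.} With $2s\approx k(k+1)$ and $m$ large, the exponent is
\[
(m-2s)\theta\Big(1-\frac{1}{4(k+1)}\Big)+\theta(2s-\kappa)+\epsilon ,
\]
which is at most $\theta\big(m-\frac{m}{4(k+1)}\big)+\epsilon'$ once $m\gg k^2$, because $\frac{2s}{4(k+1)}-\kappa=O(k)$ is absorbed into $\epsilon'$ only if $\kappa$ is comparable. Otherwise that loss is made negligible by taking $m$ large relative to $k$ and choosing $\epsilon'$ accordingly.

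\textbf{Main obstacle.} The hardest step is Step 1. The arcs in $\mathfrak{m}$ exclude only approximations with $q<(\log N)^A$, so the classical Weyl bound gives only a logarithmic saving when $q$ is small and $|z-h/q|$ sits in an intermediate range. I would first try to exploit the fact that $\mathfrak{m}$ excludes such approximations simultaneously for all $j\le k-1$, so that at least one coefficient $\beta_j$ has a good minor-arc approximation at scale $N^{j\theta}$. Failing that, I would bound the exceptional set of $z$ where the pointwise bound fails. Its measure is small, so that part of the integral can be controlled by the $2s$-th moment alone.
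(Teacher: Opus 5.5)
\textbf{Step~3 does not close, and the splitting makes things worse.}

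\emph{The loss.} Suppose $\sup_{\mathfrak m}|S|\ll N^{\theta(1-\frac{1}{4(k+1)})+\epsilon}$ and $\int_0^1|S|^{2s}\ll N^{\theta(2s-\kappa)+\epsilon}$. Then your exponent exceeds the target by $\theta\bigl(\frac{2s}{4(k+1)}-\kappa\bigr)$. This is a fixed positive amount unless $\kappa\ge\frac{2s}{4(k+1)}$, which is about $k/4$ for $2s\approx k(k+1)$, and you only claim $\kappa>0$. It cannot be absorbed into $\epsilon'$, because $\epsilon'$ is arbitrary. Taking $m$ large does not help either: the target is exactly $m$ times the pointwise exponent, so it has no slack that grows with $m$.

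\emph{The split is unnecessary.} If Step~1 held, the paper's route finishes at once: $\int_{\mathfrak{m^*}}|S|^m\le\mathfrak{L}(\mathfrak{m^*})(\sup_{\mathfrak m}|S|)^m$, with $\epsilon=\epsilon'/m$. That is how Theorem~1.2 is meant to follow from Lemma~2.2.

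\emph{Step~1 has no mechanism.} In the paper, the saving $\frac{1}{4(k+1)}$ comes from Lemma~2.1. It averages over shifts $y\le Y$, applies H\"older with the Vinogradov mean value theorem at $l\approx k(k+1)/2$, and uses the Diophantine condition at scale $N^{(k-j-1/2)\theta}$ to produce $X^{-kc}$ with $c=\frac12$. Your Step~1 offers nothing of that size. A saving $\sigma\approx 1/(k(k-1))$ is far too small. A saving ``of order $1/k$'' is also not enough, since the target fixes the constant at $\frac{1}{4(k+1)}$.

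\textbf{The obstacle you flag is genuine, and your fallback cannot repair it.}

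\emph{What $\mathfrak m$ actually excludes.} The $j=k-1$ condition dominates, so $\mathfrak m$ only removes $|z-h'/q'|\le(q'N^{\theta/2})^{-1}$ for $q'<(\log N)^A$. Denominators just above $(\log N)^A$ are not excluded at all.

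\emph{A large-values set inside $\mathfrak m$.} Take a prime $q\equiv1\pmod k$ in $[(\log N)^A,2(\log N)^A]$, and $z=h/q+\beta$ with $|\beta|\le(100kN^{k-1+\theta})^{-1}$. Since $|h/q-h'/q'|\ge1/(qq')$, every such $z$ lies in $\mathfrak m$. Partial summation gives
\[
S(z)=q^{-1}G(h,q)\int_{N-N^\theta}^{N+N^\theta}e(\beta t^k)\,dt+O(q),\qquad G(h,q)=\sum_{r \bmod q}e(hr^k/q).
\]
Because $\sum_h|G(h,q)|^2=(k-1)q(q-1)$ and $|G|\le(k-1)\sqrt q$, we have $|G(h,q)|\ge\sqrt{q/2}$ for $\gg q/k$ residues $h$. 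Hence $|S(z)|\gg N^{\theta}(\log N)^{-A/2}$ on a subset of $\mathfrak m$ of measure $\gg_k N^{-(k-1)-\theta}$.

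\emph{Consequences.}
\begin{itemize}
\item This contradicts the uniform pointwise bound your Step~1 needs. It is also exactly the claim of the paper's Lemma~2.2 for all $\alpha\in\mathfrak m$, so the paper's deduction has the same defect.
\item It gives $\int_{\mathfrak m}|S|^m\gg N^{m\theta-(k-1)-\theta-o(1)}$. For small $\epsilon'$ this exceeds $N^{\theta m(1-\frac{1}{4(k+1)})+\epsilon'}$ as soon as $m\theta>4(k+1)(k-1+\theta)$.
\item ``Sufficiently large $m,k$'' includes that range. There, no treatment of the exceptional set, whether by a $2s$-th moment or by a measure bound, can succeed, because the statement itself fails.
\end{itemize}
A provable version must at least restrict $m$ relative to $k^2/\theta$, or shrink $\mathfrak m$ so that it excludes denominators up to a power of $N$. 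Even then you would need a large-values estimate, not a uniform pointwise bound on the whole arc.
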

To prove the Theorem 1.2, we will prove lemma 2.1, lemma 2.2 at first. By the way, due to the version of measure theory and Algebraic Geometry, we can overcomes the parity of the moments in substance (The meaning is to prove the general theorem without norm inequality), which is hard to do in the past.\\
Our main idea is a method called "The Efficient Partition of Weyl Difference". After doing simple Weyl difference, we partition the sum into three parts: The sum of large difference (case I); The sum of small difference and good Diophantine quality (case II); The sum of small difference and bad Diophantine quality (case III). We will prove that the contribution of case III is far less than case II totally, so we just need to calculate the case I and case II. Then, when the exponential sum of case I is larger than that of case II, we keep doing the Weyl Difference; When the exponential sum of case I is smaller than that of case II, we stop the doing the difference and use the bound of the sum of case II to calculate the exponential sum. From setting the lemma 4.1 and 4.2, we can get the exact proportion between case I and case II, and we can calculate the contributions of these cases. Therefore, we will hold a parameter optimization problem (with self-reference phenomenon) in Section 6, then we can prove the theorem 1.1.\\
The most important tool during the process of calculating the contribution of case II is lemma 2.2. First we use the classical method in \cite{PP1986} to transform the Weyl sum into a several product of linear exponential sum and the integral in Vinogradov's main conjecture. According to the result of main conjecture, we can bound the second part. Then, we can bound the first part through the Diophantine Approximation of the count in broken minor arcs.\\
After showing the outline of this paper, we still have some conjecture wait to solve in the future.
\begin{conjecture}
Let $f(u)=\textbf{a}\cdot(u,u^2,...u^k)$. When $m,k$ satisfied $m^{1/2-\delta-\epsilon}>k>\log m$ are sufficiently large, and $N$ is sufficiently large in terms of $m,k$, there exist a arc $\mathfrak{B}\subset[0,1)^k$, and $|\mathfrak{B}|=1+o(1)$, hold following inequality:
\[
\int_\mathfrak{B'}\Big|\sum\limits_{N-N^\theta\leq u< N+N^\theta}e(f(u))\Big|^m\mathrm{d}\mathbf{a}\ll |\mathfrak{B'}|N^{\theta m(1-\frac{1}{2(k+1)}+m^{-1/2-\delta})}
\]
where $\theta<1/2$, $\epsilon,\epsilon'$ are any given positive number, and $\mathfrak{B'}\subset\mathfrak{B}$ is any given subset.
\end{conjecture}
This question maybe could be solved by the technique like lemma 2.2, and we need to develop the Diophantine simultaneous approximation skill.
\begin{conjecture}
For the sufficiently large numbers $m^{1/2-\delta-\epsilon}>k>\log m$ and any arcs $\mathfrak{m^*}\subset\mathfrak{m}$ whose Lebesgue measure $\mathfrak{L}(\mathfrak{m^*})>c$, and $c$ is a constants not rely on $m,N$, we have
\[
\int_{\mathfrak{m^*}}\bigg|\sum\limits_{{N-N^\theta}<u<{N+N^\theta}}{e(zu^k)}\bigg|^{m}\mathrm{d}z
\ll|\mathfrak{m^*}|N^{\theta m(1-\frac{1}{2(k+1)})+o(1)}
\]
where $\epsilon,\epsilon'$ are any given positive number, $\theta\leq1$
\end{conjecture}
This conjecture is the similar to what we do, and we believe the result can be proved in nearly future. After all, we still have a conjecture during the research, which is the improvement of lemma 2.2.
\begin{conjecture}
Let $f(x)\in\textbf{Z}[x],\deg f=k$ is a monic polynomial, and $H=[N-N^\theta,N+N^\theta]$. The following estimate
\[
\sum\limits_{\substack{x\in H}}e(\alpha f(x))\ll_\epsilon N^{\theta(1-\frac{1}{k})+\epsilon}
\]
hold true for almost any given $\alpha\in\mathfrak{m^*}$.
\end{conjecture}
We guess the bound of conjecture 1.3 is the best, for it is the classical result of Hua,L.K.,\cite{Hua1963},\cite{PP1986} and \cite{Weil1948}, and it could not be improve in a large range. In the process of completing this paper, we try to prove this inequality for several times, but it seems hard to complete it. The tool we used before is Duffin-Schaeffer theorem, which comes from \cite{koukoulopoulos2024sharpquantitativeversionduffinschaeffer}, and it is used to transform the Weyl sum into rational sum.
\section{The pointwise bound according to Vinogradov's main value theorem}
In this section we will prove the following lemmas:(By the way the definition of letter $m$ in this section in not the same as it in the next section.)
\begin{lemma}
We set $f(x)=\sum_{1\leq i\leq k+1}\alpha c_{i}x^i$, where $l,N,X,Y$ are four integers, where $Y<X$ is sufficiently large and $X=o(N)$, $l>k(k+1)/2$. In this condition, for any given $\epsilon>0$, we have
\begin{equation}
|\sum\limits_{n=N}^{N+X}e(f(n))|\ll Y+X^{1-\frac{kc}{4l}+\epsilon}
\end{equation}
hold true for almost any given $\alpha$ belong to
\[
\mathfrak{m}(Y,X)=\bigcap_{j\leq k-1}\left\{\alpha:\forall q<Y^j,h<q,(h,q)=1,\Big|\alpha-\frac{h}{q}\Big|>\frac{1}{X^{k-j-c}q}\right\}
\]
and $\ll$ is a absolute constant.
\end{lemma}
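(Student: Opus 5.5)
We follow the classical route of \cite{PP1986} mentioned in the introduction: compare the Weyl sum with a Vinogradov mean value. Write $S=\sum_{n=N}^{N+X}e(f(n))$.

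\textbf{Step 1: shifting.} Fix a parameter $H\le X$ (e.g.\ $H=X^{1/(k+1)}$ or a power to be optimised). For every $1\le h\le H$ we have
\[
S=\sum_{n=N}^{N+X}e(f(n+h))+O(H).
\]
Averaging over $h\le H$ and writing $n=N+x$, we obtain
\[
S\ll H^{-1}\sum_{x\le X}\Big|\sum_{h\le H}e(f(N+x+h))\Big|+H.
\]
Expanding $f(N+x+h)$ in powers of $h$ gives $\sum_{j=1}^{k+1}\beta_j(x)h^j$. Here each coefficient $\beta_j(x)$ is a polynomial in $x$ of degree $k+1-j$, with leading coefficient a fixed multiple of $\alpha c_{k+1}$, and its lower coefficients also involve $\alpha$. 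Hölder's inequality with exponent $2l$ gives
\[
|S|^{2l}\ll X^{2l-1}H^{-2l}\sum_{x\le X}\Big|\sum_{h\le H}e\big(\textstyle\sum_j\beta_j(x)h^j\big)\Big|^{2l}+H^{2l}.
\]

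\textbf{Step 2: reduction to a mean value.} Open the $2l$-th power. Each summand becomes $\sum_{\mathbf{s}}J(\mathbf{s})\,e(\boldsymbol\beta(x)\cdot\mathbf{s})$, where $J(\mathbf{s})$ counts solutions of the Vinogradov system of degree $k+1$ with value vector $\mathbf{s}$. Since $J(\mathbf{s})\le J(\mathbf{0})$ and, by the main conjecture \cite{bourgain2016proofmainconjecturevinogradovs}, $J(\mathbf 0)\ll H^{2l-k(k+1)/2+\epsilon}$ once $l>k(k+1)/2$, we are left to bound
\[
\sum_{|s_j|\ll H^j}\Big|\sum_{x\le X}e\big(\boldsymbol\beta(x)\cdot\mathbf{s}\big)\Big|.
\]
For fixed $\mathbf s$, the phase $\boldsymbol\beta(x)\cdot\mathbf{s}$ is a polynomial in $x$ of degree at most $k$. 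Its coefficient of $x^{k+1-j}$ is essentially $\alpha\cdot(\text{integer combination of } s_j,\dots,s_{k+1})$. We apply this only at the top (linear or low-degree) level: sum over the $x$-variable in which the phase is linear, and use
\[
\sum_{x\le X}e(\gamma x)\ll\min(X,\|\gamma\|^{-1}).
\]
This produces sums of the type $\sum_{s\ll H^{j}}\min(X,\|\alpha q_j s\|^{-1})$.

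\textbf{Step 3: Diophantine input (main obstacle).} This is where the hypothesis $\alpha\in\mathfrak m(Y,X)$ enters. By Dirichlet, choose $h/q$ with $|\alpha-h/q|\le 1/(qQ)$. Membership in $\mathfrak m(Y,X)$ for the relevant index $j$ forces either $q\ge Y^j$ or $|\alpha-h/q|>1/(qX^{k-j-c})$. The standard reciprocal-sum lemma then gives
\[
\sum_{s\le S}\min\big(X,\|\alpha s\|^{-1}\big)\ll (S/q+1)(X+q\log q).
\]
In the two cases this yields a saving of roughly $X^{-c/?}$ per level. Because there are $k$ levels and the Hölder exponent is $2l$, the saving after taking $2l$-th roots becomes $X^{-kc/(4l)}$, consistent with the stated exponent. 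Choosing $H$ so that the $H$ term is absorbed into $Y$ (the small-$q$ regime $q<Y^j$ costs at most $Y$) gives
\[
S\ll Y+X^{1-kc/(4l)+\epsilon}.
\]

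I expect the hardest point to be making the ``almost every $\alpha$'' quantifier precise. The Dirichlet approximation must simultaneously control all multiples $q_j\alpha$ that arise from the binomial coefficients. I would handle this by a Borel--Cantelli argument: discard the null set of $\alpha$ whose multiples $m\alpha$ ($m\ll_k 1$) admit abnormally good approximations in the range $Y^j\le q\le X^{k-j-c}$. On the complement, the convex-combination argument of Step 3 goes through uniformly in $j\le k-1$.
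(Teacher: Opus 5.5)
There is a genuine gap in the passage from Step 2 to Step 3. Once you use $J(\mathbf s)\le J(\mathbf 0)$ and put absolute values inside, what remains is $\sum_{\mathbf s}\bigl|\sum_{x\le X}e(\boldsymbol\beta(x)\cdot\mathbf s)\bigr|$. For fixed $\mathbf s$ this phase is not linear in $x$.

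\begin{itemize}
\item $\beta_1(x)=f'(N+x)$ has degree $k$, so the coefficient of $x^k$ in $\boldsymbol\beta(x)\cdot\mathbf s$ is $(k+1)c_{k+1}\alpha s_1$. In general the coefficient of $x^{k+1-j}$ involves $s_1,\dots,s_j$, not $s_j,\dots,s_{k+1}$.
\item So for every $\mathbf s$ with $s_1\neq 0$ the inner sum is a genuine degree-$k$ Weyl sum. That is a problem of the same type as the one you are proving, and $\sum_x e(\gamma x)\ll\min(X,\|\gamma\|^{-1})$ does not apply to it.
\item The trivial bound for this remainder is $J(\mathbf 0)\,\#\{\mathbf s\}\,X\approx XH^{2l+k+1}$ with your value of $J(\mathbf 0)$. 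Even with the sharp bound for the degree-$(k+1)$ system it is at best $\approx XH^{2l}$. That is no better than the trivial bound for $\sum_x|\sum_h|^{2l}$, so you would need cancellation in almost all of these degree-$k$ sums just to break even.
\end{itemize}

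Consequently Step 3 has nothing to act on. The saving ``$X^{-c/?}$ per level'' is never determined, and $kc/(4l)$ is read off from the statement rather than derived. In your scheme there is a single $x$-sum for each $\mathbf s$, so savings at different levels cannot multiply. No step produces the factor $\tfrac12$ hidden in the $4l$.

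The missing idea is the paper's: apply Cauchy--Schwarz in the dual variables instead of $J(\mathbf s)\le J(\mathbf 0)$, with the roles of the two variables reversed. The paper puts the mean value on the long variable ($I(X;k,l)$, with dual variables $\lambda_j<lX^j$). It keeps the short shifts in $\sum_{\boldsymbol\lambda}\bigl|\sum_{y\le Y}e\bigl(\sum_j\tfrac{1}{j!}f^{(j)}(y)\lambda_j\bigr)\bigr|^2$. Opening the square makes the phase linear in each $\lambda_j$, so the $\lambda_j$-sums are geometric and give $\sum_{y,y'\le Y}\prod_{j\le k}\min\bigl(lX^j,\|\tfrac{1}{j!}(f^{(j)}(y)-f^{(j)}(y'))\|^{-1}\bigr)$. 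Each argument here is $\alpha$ times an integer bounded by a power of $Y$ (for integral $c_i$), which is the range of denominators $q<Y^j$ that $\mathfrak m(Y,X)$ controls. A factor $X^{-c}$ at each of the $k$ levels, then the square root, then the $2l$-th root, is what produces $kc/(4l)$.

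Your orientation needs Diophantine information against denominators far beyond $Y^j$, about which $\mathfrak m(Y,X)$ says nothing. Dirichlet plus the reciprocal-sum lemma, knowing only $q\ge Y^j$, guarantees a saving of about $Y^{-j}$ — a power of $Y$, not of $X$.

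If you follow the paper's route, treat the diagonal $y=y'$ separately. There every factor is $\asymp X^j$, contributing $\asymp YX^{k(k+1)/2}$, which is not absorbed by $Y^2X^{k(k+1)/2-kc}$ unless $Y\ge X^{kc}$.

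Finally, the Borel--Cantelli step does not give what you want. For fixed $X,Y$, the set of $\alpha$ with unusually good approximations for $q$ in a finite range has positive measure. Borel--Cantelli only yields an $\alpha$-dependent threshold $X_0(\alpha)$, which is incompatible with a uniform implied constant. The paper instead uses membership in $\mathfrak m(Y,X)$ deterministically and reads ``almost all'' as $\mathfrak L([0,1]\setminus\mathfrak m(Y,X))=o(1)$.
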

\begin{proof}
We set $g(n)=f(n+N)$, and we know
\[
|\sum\limits_{n=N}^{N+X}e(f(n))|=|\sum\limits_{n=0}^{X}e(g(n))|
\]
and the first coefficient of $g(n)$ is the same as $f(n)$, so we just need to prove
\[
|\sum\limits_{n=0}^{X}e(f(n))|\ll Y+X^{1-\frac{kc}{4l}+\epsilon}
\]
From \cite{PP1986}, we know 
\[
|\sum\limits_{n=0}^{X}e(f(n))|=Y^{-1}\sum\limits_{y\leq Y}\sum\limits_{n\leq X}e(f(x+y))+O(Y)
\]
and we have
\[
Y^{-1}\Big|\sum\limits_{y\leq Y}\sum\limits_{n\leq X}e(f(x+y))\Big|^{2l}\leq Y^{-1}X^lI^{1/2}(X;k,l)\Big(\sum\limits_{\lambda_1,...\lambda_k}\big|\sum\limits_{y\leq Y}e(f^{(1)}(y)\lambda_1+...+\frac{1}{k!}f^{(k)}(y)\lambda_k)\big|^2\Big)^{1/2}
\]
where $\lambda_j<lX^j$ and
\[
I(X;k,l)=\int_{[0,1]^k}\bigg|\sum\limits_{u\leq X}e(a_ku^k+...+a_1u)\bigg|^{2l}\mathrm{d}\textbf{a}
\]
By Vinogradov's main conjecture(To see in \cite{bourgain2016proofmainconjecturevinogradovs} for $k>3$, and \cite{wooley2014cubiccasemainconjecture} in condition that $k=3$) we know
\[
I(X;k,l)\ll X^{\epsilon}(X^{l}+X^{2l-k(k+1)/2})
\]
From Holder inequality(The detail can be found in \cite{PP1986}, just some elementary skills), we have
\[
Y^{-2l}\Big|\sum\limits_{y\leq Y}\sum\limits_{n\leq X}e(f(x+y))\Big|^{2l}\leq Y^{-1}X^{2l-k(k+1)/4+\epsilon}\Big(\sum\limits_{\lambda_1,...\lambda_k}\big|\sum\limits_{y\leq Y}e(f^{(1)}(y)\lambda_1+...+\frac{1}{k!}f^{(k)}(y)\lambda_k)\big|^2\Big)^{1/2}
\]
Then we know
\begin{align*}
&\sum\limits_{\lambda_1,...\lambda_k}\big|\sum\limits_{y\leq Y}e(f^{(1)}(y)\lambda_1+...+\frac{1}{k}f^{(k)}(y)\lambda_k)\big|^2\\
=&\sum\limits_{y\leq Y}\sum\limits_{y'\leq Y}\sum\limits_{\lambda_1,...\lambda_k}e((f^{(1)}(y)-f^{(1)}(y'))\lambda_1+...+\frac{1}{k!}(f^{(k)}(y)-f^{(k)}(y'))\lambda_k)\\
=&\sum\limits_{y\leq Y}\sum\limits_{y'\leq Y}\prod\limits_{1\leq j\leq k}\sum\limits_{\lambda_j<lX^j}e\big(\frac{1}{j!}(f^{(j)}(y)-f^{(j)}(y'))\big)
\end{align*}
We know
\[
\sum\limits_{\lambda_j<kX^j}e\big(\frac{1}{j!}(f^{(j)}(y)-f^{(j)}(y'))\lambda_j\big)\ll_k\min\bigg(X^j,\frac{1}{2\|\frac{1}{j!}(f^{(j)}(y)-f^{(j)}(y'))\|}\bigg)
\]
From the definition of $\mathfrak{m}(Y,P)$, we know
\[
\|\frac{1}{j!}(f^{(j)}(y)-f^{(j)}(y'))\|\gg X^{j}
\]
Then we know
\[
\prod\limits_{1\leq j\leq k}\sum\limits_{\lambda_j<lX^j}e\big(\frac{1}{j!}(f^{(j)}(y)-f^{(j)}(y'))\big)\ll\prod\limits_{1\leq j\leq k}\frac{1}{\|\frac{1}{j!}(f^{(j)}(y)-f^{(j)}(y'))\|}
\]
Then we have
\[
\prod\limits_{1\leq j\leq k}\sum\limits_{\lambda_j<lX^j}e\big(\frac{1}{j!}(f^{(j)}(y)-f^{(j)}(y'))\big)\ll X^{(k+1)k/2-kc}
\]
and
\[
\sum\limits_{\lambda_1,...\lambda_k}\big|\sum\limits_{y\leq Y}e(f^{(1)}(y)\lambda_1+...+\frac{1}{k!}f^{(k)}(y)\lambda_k)\big|^2\ll Y^2X^{k(k+1)/2-kc}
\]
Hence we have
\[
Y^{-1}\sum\limits_{y\leq Y}\sum\limits_{n\leq X}e(f(x+y))\ll X^{2l-\frac{kc}{2}+\epsilon}
\]
So we complete the proof.
\end{proof}
To ensure our estimate is effective for nearly almost any given $\alpha$, we need
\[
\mathfrak{L}([0,1]-\mathfrak{m}(Y,X))=o(1)
\]
which means
\[
\frac{Y^j}{X^{k-j-c}}=o(1),\forall j>1
\]
choosing $Y=(\log x)^A$, $c=1/2+\epsilon$ and we have following theorem:
\begin{lemma}
Let $f(x)\in\textbf{Z}[x],\deg f=k$ is a monic polynomial, and $H=[N-N^\theta,N+N^\theta]$. The following estimate
\[
\sum\limits_{\substack{x\in H}}e(\alpha f(x))\ll_\epsilon N^{\theta(1-\frac{1}{4(k+1)})+\epsilon}
\]
hold true for any given $\epsilon>0$ and any $\alpha\in\mathfrak{m}$.
\end{lemma}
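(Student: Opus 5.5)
We deduce Lemma 2.2 from Lemma 2.1 by specialising the parameters. The degree $k$ in Lemma 2.2 plays the role of $k+1$ in Lemma 2.1, since Lemma 2.1 treats $f(x)=\sum_{1\le i\le k+1}\alpha c_i x^i$. So we apply Lemma 2.1 with degree parameter $k-1$, or equivalently keep the paper's indexing and absorb the shift into the exponent.

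First we translate the short interval: write $x=N-N^{\theta}+n$ with $0\le n\le X:=2N^{\theta}$. As in the first step of the proof of Lemma 2.1, we pass to $g(n)=\alpha f(n+N-N^\theta)$. This is a polynomial of degree $k$ with leading coefficient $\alpha$, since $f$ is monic. Next we choose $Y=(\log N)^A$, $c=1/2+\epsilon$ and $l$ the least integer exceeding $k(k+1)/2$, i.e.\ $l\asymp k^2/2$. Lemma 2.1 then gives
\[
\sum_{x\in H}e(\alpha f(x))\ll (\log N)^A+X^{1-\frac{kc}{4l}+\epsilon}.
\]
Since $\frac{kc}{4l}\ge \frac{k(1/2)}{4\cdot (k(k+1)/2+1)}$, this is at least of size $\frac{1}{4(k+1)}$ up to the harmless constant adjustments. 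Substituting $X\asymp N^\theta$ then yields $N^{\theta(1-\frac{1}{4(k+1)})+\epsilon}$, and the term $(\log N)^A$ is negligible.

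The key compatibility step is to check that $\alpha\in\mathfrak m$ implies $\alpha\in\mathfrak m(Y,X)$ with these parameters. The condition in $\mathfrak m$ is $|\alpha-h/q|>q^{-1}N^{-(k-j-1/2)\theta}$ for $q<(\log N)^A$. With $X=2N^\theta$ and $c=1/2+\epsilon$ we have $X^{k-j-c}\le N^{(k-j-1/2)\theta}$ for large $N$, so the inequality in $\mathfrak m$ is stronger than the one needed. For the range of $q$ we need $Y^j\le (\log N)^A$; we meet this by replacing $A$ by $A/k$ in the choice of $Y$, which only changes the implied constants. We must also verify that the lower-order coefficients $c_i$ produced by the shift do not matter. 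The Diophantine condition in the proof of Lemma 2.1 enters only through $\frac{1}{j!}(f^{(j)}(y)-f^{(j)}(y'))$. After the shift, the relevant top-order differences are still integer multiples of $\alpha$ times $(y-y')$ plus terms with fixed integer coefficients, and these are controlled by the same minor-arc hypothesis.

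The main obstacle is that Lemma 2.1 is stated only for almost every $\alpha$ in $\mathfrak m(Y,X)$, whereas Lemma 2.2 claims the bound for \emph{every} $\alpha\in\mathfrak m$. The proof of Lemma 2.1 is in fact pointwise: the lower bound $\|\frac1{j!}(f^{(j)}(y)-f^{(j)}(y'))\|$ is deduced directly from membership in $\mathfrak m(Y,X)$. So I would rerun that argument for a fixed $\alpha\in\mathfrak m$, treating the diagonal pairs $y=y'$ separately; they contribute $Y\prod_j X^j=YX^{k(k+1)/2}$, which is acceptable. For $y\ne y'$ we use the minor-arc lower bound on $\|\cdot\|$. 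This removes the exceptional set. The delicate point is the Hölder bookkeeping that turns $X^{2l-kc/2+\epsilon}$ for the $2l$-th power into the exponent $1-\frac{kc}{4l}$; I would redo it carefully so that the resulting saving really reaches $\frac{1}{4(k+1)}$.
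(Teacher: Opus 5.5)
Your route is exactly the paper's: there Lemma 2.2 is obtained only by inserting $Y=(\log N)^A$, $c=1/2+\epsilon$, $X\asymp N^\theta$ into Lemma 2.1. You rightly notice that this gives only an ``almost every $\alpha$'' statement. Your repair, rerunning Lemma 2.1 pointwise, breaks in two places.

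First, the diagonal is not acceptable. With $Y=(\log N)^A$ the pairs $y=y'$ contribute $\asymp YX^{k(k+1)/2}$ to $\sum_{\lambda}|\sum_{y}e(\cdot)|^2$. After Cauchy--Schwarz against the mean-value factor $X^{2l-k(k+1)/4+\epsilon}$, this alone leaves $Y^{-1/(4l)}X^{1+\epsilon}$ in the final bound, so no power of $N$ is saved. A power saving needs $Y$ to be a power of $X$, and then $\mathfrak m(Y,X)$ requires Diophantine information at denominators up to a power of $N$, which $\mathfrak m$ does not give.

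Second, off the diagonal, $\gamma_j(y,y')=\frac{1}{j!}(g^{(j)}(y)-g^{(j)}(y'))$ is $\alpha$ times an integer that becomes huge after the shift by $M=N-N^\theta$. For $f(x)=x^k$ it is $\binom{k}{j}\big((y+M)^{k-j}-(y'+M)^{k-j}\big)$, of size about $M^{k-j-1}|y-y'|$ when $j\le k-2$. Information about approximations with denominators below $(\log N)^A$ gives no lower bound for $\|\gamma_j\|$, contrary to your claim that these terms are ``controlled by the same minor-arc hypothesis''. The displayed inequality $\|\cdot\|\gg X^j$ in the proof of Lemma 2.1 is impossible as written (norms are at most $1/2$), so there is no valid pointwise bound there to reuse.

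Your inclusion check is also reversed. The inequality $X^{k-j-c}\le N^{(k-j-1/2)\theta}$ makes the condition defining $\mathfrak m(Y,X)$ the \emph{stronger} one, so $\mathfrak m\not\subseteq\mathfrak m(Y,X)$ for $c=1/2+\epsilon$. With $c=1/2$ the inclusion holds, but then $kc/(4l)$ falls short of $1/(4(k+1))$ by a fixed amount that an arbitrary $\epsilon$ cannot absorb.

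More decisively, no repair exists, because the statement is false: $\mathfrak m$ only removes neighbourhoods of rationals with denominator below $(\log N)^A$. Take $k\ge2$, $f(x)=x^k$, a prime $p$ with $(\log N)^{A/k}<p<2(\log N)^{A/k}$ (so $p\nmid k$ for large $N$), and $\alpha=p^{-k}$. For $q<(\log N)^A<p^k$ one has $|\alpha-h/q|\ge 1/(p^kq)>1/(qN^{\theta/2})$, since $p^k<2^k(\log N)^A<N^{\theta/2}$. As $N^{\theta/2}$ is the smallest of the thresholds $N^{(k-j-1/2)\theta}$, this gives $\alpha\in\mathfrak m$.

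Writing the units modulo $p^k$ as $u+p^{k-1}t$ shows that they cancel in $\sum_{x\bmod p^k}e(x^k/p^k)$, while the multiples of $p$ contribute $p^{k-1}$. Splitting $H$ into complete periods gives
\[
\Big|\sum_{x\in H}e(\alpha x^k)\Big|\ge\Big(\frac{2N^\theta}{p^k}-2\Big)p^{k-1}-p^k\gg\frac{N^\theta}{(\log N)^{A/k}},
\]
which exceeds $N^{\theta(1-\frac{1}{4(k+1)})+\epsilon}$ as soon as $\epsilon<\theta/(4(k+1))$.

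So the step from ``almost every'' to ``every $\alpha\in\mathfrak m$'' is not a technicality: it is the whole content of the lemma, and that content is false. The almost-every version is trivial in any case. Since $\int_0^1|\sum_{x\in H}e(\alpha f(x))|^2\,d\alpha\ll N^\theta$, the sum is below $N^{\theta/2+\epsilon}$ outside a set of measure $O(N^{-2\epsilon})$.
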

\section{The example of Efficient Partition of Weyl Difference}
We set $U=2^ny,(2,y)=1,z_1=a/T+a^*/T^2,a^*\in(0,1)$, and
\[
z_j(h_1,..,h_j)=\frac{k!z_1}{(k-j)!}\prod\limits_{i\leq j}h_i
\]
We definite
\begin{align*}
f_{j+1}(u_1,h_1,..,h_{j+1})=&f_{j}(u_1+h_{j+1},h_1,..,h_{j})-f_{j}(u_1,h_1,..,h_{j})\\+&z_j(h_1,..,h_j)((u_1+h_{j+1})^{k-j}-u_1^{k-j})-z_{j+1}(h_1,..,h_j,h_{j+1})
\end{align*}
where $f_0=1$ and $h_j$ will appear in our following process. Due to our problem is in short interval, The number of solutions is not too big. This character inspire us to use Weyl Differential more accuracy in following way. More exactly, We have
\begin{lemma}
Let $T(K)$ to be the number of solutions of the equation 
\[
|f(x)-f(y)|\leq K\quad x,y\in[N-N^\theta,N+N^\theta]
\]
where the first coefficient of $f$ is one. Setting $K=o(N^{k})$ and $N^{k-1}=o(K)$ we have
\[
T(K)=\frac{2K}{kN^{k-1-\theta}}(1+O(\frac{1}{N}))+O(\frac{K^{2}}{N^{2k-2-\theta}})+O(N^\theta)
\]
where $\epsilon$ is any given positive number.
\end{lemma}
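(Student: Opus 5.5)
The approach is to use the mean value theorem on the short interval $H=[N-N^\theta,N+N^\theta]$, where $f$ behaves almost linearly. For $x,y\in H$ we write $f(x)-f(y)=(x-y)\,g(x,y)$ with
\[
g(x,y)=\sum_{i=0}^{k-1}x^iy^{k-1-i}+(\text{lower terms}).
\]
Since $|x-N|,|y-N|\le N^\theta$ and $\theta<1$, we have $g(x,y)=kN^{k-1}\bigl(1+O(N^{\theta-1})\bigr)$ uniformly on $H\times H$. Hence the condition $|f(x)-f(y)|\le K$ is equivalent to
\[
|x-y|\le \frac{K}{kN^{k-1}}\bigl(1+O(N^{\theta-1})\bigr).
\]
This is the key reduction: counting pairs with a small difference of values becomes counting lattice pairs in a thin diagonal strip of $H\times H$.

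Set $D=K/(kN^{k-1})$. The hypotheses $N^{k-1}=o(K)$ and $K=o(N^k)$ give $1\ll D=o(N)$. I would count, for each $d=x-y$ with $|d|\le D(1+O(N^{\theta-1}))$, the number of pairs $(x,y)\in H^2$ with $x-y=d$. This number is $2N^\theta-|d|+O(1)$ when $|d|\le 2N^\theta$, and $0$ otherwise. Two regimes are natural.

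\textbf{Case $D\le 2N^\theta$.} Summing over $d$ gives
\[
\sum_{|d|\le D'}\bigl(2N^\theta-|d|+O(1)\bigr)=2D'\cdot 2N^\theta-D'^2+O(D'),
\]
where $D'=D(1+O(N^{\theta-1}))$. The main term is $4N^\theta D'=\frac{4K N^\theta}{kN^{k-1}}\bigl(1+O(N^{\theta-1})\bigr)$. The quadratic term is $O(K^2/N^{2k-2})$. The boundary and rounding errors contribute $O(D+N^\theta)$, where the $O(N^\theta)$ accounts for the diagonal and endpoint effects.

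\textbf{Case $D>2N^\theta$.} The count is trivially $\asymp N^{2\theta}$, which can be absorbed into the stated error terms provided we interpret them suitably.

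I would then compare this with the target formula $\frac{2K}{kN^{k-1-\theta}}$ and its errors $O(K^2/N^{2k-2-\theta})$ and $O(N^\theta)$, checking the normalization constant (ordered versus unordered pairs, and the interval length $2N^\theta$). I expect the constant to come out as the factor $2$ or $4$ depending on convention. The error $O(K^2/N^{2k-2})$ that I obtain is stronger than the stated $O(K^2/N^{2k-2-\theta})$, so it is absorbed. The relative error $O(N^{\theta-1})$ from the mean-value step is likewise compatible with the $(1+O(1/N))$ factor only after refinement, as explained next.

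The main obstacle is precision in the mean value step. A crude expansion gives a relative error of $N^{\theta-1}$, while the statement claims $1+O(1/N)$. To reach this, I would expand $g(x,y)$ around $N$ to first order, $g=kN^{k-1}+\binom{k}{2}N^{k-2}\bigl((x-N)+(y-N)\bigr)+\cdots$. The first-order correction is odd under the reflection $(x,y)\mapsto(2N-x,2N-y)$ of $H$, so it cancels on summation. Only second-order terms survive, of relative size $N^{2\theta-2}$, together with $O(1)$ rounding per value of $d$. If this symmetry does not give the full $O(1/N)$, I would absorb the residual discrepancy into the additive error terms $O(K^2/N^{2k-2-\theta})+O(N^\theta)$, which is what the statement effectively permits.
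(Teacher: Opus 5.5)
Your reduction to lattice points in the strip $|x-y|\le D\bigl(1+O(N^{\theta-1})\bigr)$, $D=K/(kN^{k-1})$, is correct, and so is your count along the diagonals $x-y=d$. But the proposal never reaches the stated formula: you leave the constant as ``$2$ or $4$ depending on convention'', and that step cannot be completed as described. $T(K)$ counts ordered pairs; the paper's proof sums, over $x$, the number of admissible $y$. For ordered pairs your computation correctly gives the main term $4N^\theta D=\frac{4K}{kN^{k-1-\theta}}$, and no convention turns this into $2$. The paper's $2$ is an arithmetic slip. The $n=1$ term of its expansion of $\int_{N_1}^{N_2}(x^k\mp K)^{1/k}\,\mathrm{d}x$ is $\mp\frac{K}{k}\int_{N_1}^{N_2}x^{1-k}\,\mathrm{d}x\approx\mp\frac{2K}{kN^{k-1-\theta}}$, not $\mp\frac{K}{kN^{k-1-\theta}}$. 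Read as an asymptotic, your result therefore contradicts the stated main term, and nothing in the proposal reconciles the two.

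The missing observation is that the stated error term swallows the main term:
\[
\frac{K^2}{N^{2k-2-\theta}}=k^2D^2N^{\theta},\qquad \frac{2K}{kN^{k-1-\theta}}=2DN^{\theta}.
\]
The hypothesis $N^{k-1}=o(K)$ says precisely that $D\to\infty$, so the ``error'' exceeds the ``main term'' by a factor $\asymp kD$. The lemma is thus equivalent to $T(K)\ll N^\theta D^2$, which follows in one line from your reduction:
\[
T(K)\le(2N^\theta+1)\bigl(2D(1+O(N^{\theta-1}))+1\bigr)\ll N^\theta D .
\]
This disposes of the factor $2$ versus $4$, the relative error, and the regime $D>2N^\theta$ all at once. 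Your fallback of absorbing discrepancies into the additive errors is the right instinct. However, it must be applied to the constant too, and it is justified only by this size comparison; your remark that $O(K^2/N^{2k-2})$ is absorbed stopped one step short of it.

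The reflection-symmetry refinement is unnecessary, and it would not give $1+O(1/N)$ anyway. First-order terms cancel, but second-order ones do not: for $f(x)=x^k$ the mean of $x^{1-k}$ over $[N-N^\theta,N+N^\theta]$ is $N^{1-k}\bigl(1+\frac{k(k-1)}{6}N^{2\theta-2}+\cdots\bigr)$. This leaves a relative error of order $N^{2\theta-2}$, which is larger than $N^{-1}$ once $\theta>1/2$.

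As for the route: the paper fixes $x$ and counts $y\in[(f(x)-K)^{1/k},(f(x)+K)^{1/k}]$ via floor functions. It then replaces the sum by an integral and expands $(x^k\pm K)^{1/k}$ binomially, silently dropping the constraint $y\in[N-N^\theta,N+N^\theta]$. Your diagonal count keeps that constraint. It therefore yields the sharper, correct statement $T(K)=\frac{4K}{kN^{k-1-\theta}}\bigl(1+O(N^{\theta-1})\bigr)-D^2+O(N^\theta)$ for $D\le 2N^\theta$, and $T(K)\asymp N^{2\theta}$ beyond; it is the more informative argument. Both versions tacitly need the lower coefficients of $f$ to be bounded, which you should state.
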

Then we need to prove that the Diophantine property of $z_i$ is the same. 
\begin{lemma}
For any given $\alpha$ we have
\[
z_j(h_1,..,h_j)=\frac{k!z_1}{(k-j)!}\prod\limits_{i\leq j}h_i\notin\mathfrak{m}
\]
happen with the probability less than $2N^{-1/2+\epsilon}$.
\end{lemma}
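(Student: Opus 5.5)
The plan is to fix $\alpha=z_1$ and read the probability in the statement as counting over the difference parameters $(h_1,\dots,h_j)$. These run uniformly over the box in which they appear in the Weyl differencing of Section 3. I take each $|h_i|\le N^{\theta}$, so the box has size $\asymp N^{j\theta}$. I would also assume throughout that $z_1$ itself lies in $\mathfrak m$. If it did not, the statement is false already for $h_1=\dots=h_j=1$ and $z_1$ rational with small denominator, so "any given $\alpha$" has to be read as "any given $\alpha\in\mathfrak m$".

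Write $c_j=k!/(k-j)!$ and $H=h_1\cdots h_j$, so that $z_j=c_jHz_1$. The event $z_j\notin\mathfrak m$ means that for some $i\le k-1$, some $q<(\log N)^A$ and some $a$ coprime to $q$,
\[
\|q\,c_jHz_1\|\le \delta_i:=N^{-(k-i-\frac12)\theta}.
\]
Since $q$ has only $(\log N)^A$ choices and $i$ only $k$ choices, a union bound reduces the lemma to a single pair $(q,i)$. For that pair the task is to bound the number of integers $n$ with $|n|\le M:=c_jN^{j\theta}$ satisfying $\|n\beta\|\le\delta_i$, where $\beta=qz_1$. Passing back from integers $n$ to tuples costs only the divisor bound $\tau_j(n)\ll N^{\epsilon}$, which is where the $\epsilon$ in the exponent comes from.

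For the count over $n$ I would use Dirichlet approximation. Choose $\beta=b/r+\eta$ with $(b,r)=1$, $r\le R$ and $|\eta|\le 1/(rR)$, with $R$ to be fixed later. The standard large-sieve/lattice count, which splits $n$ into blocks of length $r$, gives
\[
\#\{|n|\le M:\ \|n\beta\|\le\delta\}\ll\Big(\frac{M}{r}+1\Big)(r\delta+1).
\]
Dividing by the size of the box, the probability is
\[
\ll N^{\epsilon}\Big(\delta+\frac1r+\frac{\delta r}{M}+\frac1M\Big).
\]
Here $\delta\le N^{-\theta/2}$ because $i\le k-1$, and $1/M\le N^{-\theta}$. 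It therefore remains to show that $r\gg N^{1/2}$, which is the main obstacle, and to check that the term $\delta r/M$ stays small once $R$ is chosen.

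To get $r$ large I would use $z_1\in\mathfrak m$. If $r$ were small, then $z_1$ would lie within $1/(qrR)$ of a rational with denominator dividing $qr$. When $qr<(\log N)^A$ this contradicts the definition of $\mathfrak m$ once $R$ is taken large enough. The difficulty is the range $(\log N)^A\le r\le N^{1/2}$: there $\mathfrak m$ gives no information, because it only constrains denominators up to $(\log N)^A$.

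In that middle range I would not try to force $r$ large. Instead I would give up on an arbitrary fixed $\alpha$ and average $\alpha$ over the remaining bad set. The set of $z_1$ that admit an approximation with $r\in[(\log N)^A,N^{1/2}]$ and $|\eta|\le 1/(rN^{1/2})$ has measure $\ll\sum_{r\le N^{1/2}} r\cdot r^{-1}N^{-1/2}$, which is $\ll N^{-1/2+\epsilon}$ up to logarithmic factors. Adding this to the $N^{-\theta/2}$ term gives the constant $2$ in the final bound $2N^{-1/2+\epsilon}$, so I expect the lemma to actually be usable in this averaged form. This averaging step is where I expect the real difficulty, and it is where the literal "for any given $\alpha$" claim may need to be weakened.
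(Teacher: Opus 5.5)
Your proposal has two genuine gaps. The first is in the averaging step, which you hoped would rescue the middle range, and it is a miscalculation. The set of $z_1$ with an approximation $b/r$, $(\log N)^A\le r\le N^{1/2}$, $|\eta|\le 1/(rN^{1/2})$, has measure at most $\sum_{r\le N^{1/2}} r\cdot 2/(rN^{1/2})\asymp 1$, since each of the $N^{1/2}$ values of $r$ contributes $N^{-1/2}$. It is not $N^{-1/2+\epsilon}$. This bound is sharp: by Dirichlet's theorem every real number has such an approximation with some $r\le N^{1/2}$. So the set you propose to discard has measure at least $1-2(\log N)^AN^{-1/2}$, and nothing is proved in the middle range.

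For a fixed $\alpha\in\mathfrak{m}$, as you suspected, nothing can be proved there. Let $r$ be a prime in $((\log N)^A,2(\log N)^A)$. Then $z_1=1/r\in\mathfrak{m}$, because $|1/r-h/q|\ge 1/(qr)>1/(qN^{\theta/2})$ for every $q<(\log N)^A$. Yet $z_j\equiv 0\pmod 1$ whenever $r\mid h_1$. That happens for a proportion $\asymp 1/r$ of the tuples, far more than $N^{-1/2+\epsilon}$.

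The second gap is independent of the first. Your own bound contains the term $\delta_{k-1}=N^{-\theta/2}$, and for $\theta<1$ this is not $O(N^{-1/2+\epsilon})$. So the claim that adding it gives $2N^{-1/2+\epsilon}$ is false. No argument can do better: the arc $0<\alpha\le N^{-\theta/2}$ (take $q=1$, $j=k-1$ in the definition) lies outside $\mathfrak{m}$.

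The paper puts the randomness in $\alpha$ from the outset. For $\alpha$ uniform on $(0,1]$, $q\alpha \bmod 1$ is again uniform; this is what the rotation $T_\alpha$ and the pull-back $T_\alpha^{-n}$ express. Hence each $q$ contributes $P(\|q\alpha\|\le N^{-1/2})=2N^{-1/2}$, and a union bound over $q<(\log N)^A$ gives $2N^{-1/2+\epsilon}$. The $h_i$ play no role, because a fixed nonzero integer multiple $c_jHz_1$ of a uniform variable is uniform mod $1$. This is the correct form of your fallback, and it needs neither Dirichlet approximation nor the count over $n=c_jH$.

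Be aware that the paper reaches $N^{-1/2}$ only by using the radius $N^{-1/2}$ in place of the actual radius $1/(qN^{\theta/2})$. With $\mathfrak{m}$ as defined, the same union bound gives $\mathfrak{L}([0,1)\setminus\mathfrak{m})\le\sum_{q<(\log N)^A}2N^{-\theta/2}\le 2(\log N)^AN^{-\theta/2}$. That is a bound $N^{-\theta/2+\epsilon}$, the order your $\delta$ term also produces, and by the lower bound above it is the true order up to $N^{\epsilon}$.
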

\begin{proof}
We set $G_\alpha(x)=x+\alpha$, and the whole orbit $\left\{T^n_\alpha(x)\right\}$ is a stochastic process. We know
\begin{align*}
\sum\limits_{q<(\log N)^A}P(q\alpha\notin\mathfrak{m})
=&\sum\limits_{j<N^{k/2}}P(||T^j_\alpha(\alpha)||\notin\mathfrak{m})\\
\leq&\sum\limits_{j<(\log N)^A}P(||T^j_\alpha(\alpha)||\leq N^{-1/2})\\
=2&\sum\limits_{j<(\log N)^A}\mathfrak{L}(T^{-n}_\alpha((0,N^{-1/2}]))<\frac{2}{N^{1/2-\epsilon}}
\end{align*}
Thus the probability to have a solution is less than $2N^{-1/2+\epsilon}$ for any $\alpha$. 
\end{proof}
To make our expression clear, we will ignore the little positive number $\epsilon>0$. The prove of lemma 3.1 can be found in Appendix. Noting that
\[
\text{Bad}(j)=\left\{h_j\Big|h_{j-1}\in\text{Bad}(j-1),z_j(h_1,..,h_j)\notin\mathfrak{m}\right\}
\]
So that $|\text{Bad}(j)|\asymp N^{-j/2}$ Returning to the exponential sum, we first do original Weyl differencing for $A$ times. For these steps, we have
\begin{align*}
|S^*(z_1;n,k)|\ll&\sum\limits_{N^\theta>|h_1|\geq\frac{K_1}{kN^{k-1}}}...\sum\limits_{N^\theta>|h_A|\geq\frac{K_A}{(k-A+1)N^{k-A}}}|S^*(\frac{k!z_1}{(k-A)!}\prod\limits_{i\leq A}h_i;n-A,k-A)|\\
&\prod\limits_{j\leq A}\bigg(2N^{\theta}-\frac{K_j}{(k-j+1)N^{k-j}}\bigg)^{2^{n-j}y-1}\\
=&\sum\limits_{N^\theta>|h_1|\geq\frac{K_1}{kN^{k-1}}}...\sum\limits_{N^\theta>|h_A|\geq\frac{K_A}{(k-A+1)N^{k-A}}}|\sum\limits_{{N_1}<u<{N_2}}{e(\frac{k!z_1}{(k-n)!}\prod\limits_{i\leq A}h_iu^{k-A}+f_A({u_1,h_1,...,h_A})})|^{2^{n-A}y}\\
&\prod\limits_{j\leq A}\bigg(2N^{\theta}-\frac{K_j}{(k-j+1)N^{k-j}}\bigg)^{2^{n-j}y-1}
\end{align*}
where
\[
S^*(z_1;n,k)=|\sum\limits_{{N_1}<u<{N_2}}{e(z_1u^k)+f_1({u_1,h_1})}|^{2^ny}
\]
and $K_i(i\leq A)=0$. This is the standard argument, we will not give all the detail. The only thing we need to tell is the range of sum, and it will be explained by the following example: We know that the range of the index of summation is various, but we use a unchanging number $\frac{K_1}{kN^{k-1}}$ to replace it. in this meaning, the sum will expand in the arc $B(u_1,\frac{K_1}{kN^{k-1}})$. The original range of the index is almost included in the arc above, and the number of outliers is far less than $\frac{K_1}{kN^{k-1}}$. More precisely, we have
\[
K>u_1^k-u_2^k=(u_1-u_2)\sum\limits_{1\leq j\leq k}u_1^ju_2^{k-j}=(u_1-u_2)(kN^{k-1}+O(N^{k-2+\theta}))
\]
Thus,
\[
u_1-u_2<\frac{K_1}{kN^{k-1}}+O(\frac{K_1}{kN^{k-\theta}})
\]
so the error term cannot destroy our result. Anyway, the range of the sum in every step is no more than $N^\theta>|h_i|$, so we will modify the form of range and ignore the same upper bound of range from now on.  After using the differences for $A$ step, we will use the improvement of Weyl differences. What we need to calculate is
\[
|\sum\limits_{{N_1}<u<{N_2}}{e(\frac{k!z_1}{(k-n)!}\prod\limits_{i\leq A}h_iu^{k-A}+f_A({u_1,h_1,...,h_A})}|^{2^{n-A}y}
\]
For this sum, we take one of the terms with an exponent of $2$, and square the base: 
\begin{align*}
&|\sum\limits_{{N_1}<u<{N_2}}{e(\frac{k!z_1}{(k-A)!}\prod\limits_{i\leq A}h_iu^{k-A}+f_A({u_1,h_1,...,h_A})})|^{2^{n-A}y}\\
=&\Big|\sum\limits_{{N_1}<u<{N_2}}{e(\frac{k!z_1}{(k-A)!}\prod\limits_{i\leq A}h_iu_1^{k-A}+f_A({u_1,h_1,...,h_A})-\frac{k!z_1}{(k-A)!}\prod\limits_{i\leq A}h_iu_2^{k-A}+f_A({u_2,h_1,...,h_A})})|^{2^{n-A-1}y}\\
=&\Big|\sum\limits_{\substack{|u_1^k-u_2^k|<K_1}}{e(\frac{k!z_1}{(k-A)!}\prod\limits_{i\leq A}h_iu_1^{k-A}+f_A({u_1,h_1,...,h_A})-\frac{k!z_1}{(k-A)!}\prod\limits_{i\leq A}h_iu_2^{k-A}+f_A({u_2,h_1,...,h_A}))}\\
+&\sum\limits_{\substack{|u_1^k-u_2^k|\geq K_1}}{e(\frac{k!z_1}{(k-A)!}\prod\limits_{i\leq A}h_iu_1^{k-A}+f_A({u_1,h_1,...,h_A})-\frac{k!z_1}{(k-A)!}\prod\limits_{i\leq A}h_iu_2^{k-A}+f_A({u_2,h_1,...,h_A})})\Big|^{2^{n-A-1}y}
\end{align*}
From lemma 2.2 we know
\begin{align*}
&\Big|\sum\limits_{\substack{|u_1^k-u_2^k|<K_1}}\sum\limits_{N_1<u<N_2}{e(\frac{k!z_1}{(k-A)!}\prod\limits_{i\leq A}h_iu_1^{k-A}+f_A({u_1,h_1,...,h_A})-\frac{k!z_1}{(k-A)!}\prod\limits_{i\leq A}h_iu_2^{k-A}+f_A({u_2,h_1,...,h_A})})\bigg|\\
\leq&\bigg|\sum\limits_{h_{A+1}<\frac{K_{A+1}}{(k-A)N^{k-A-1}}}\sum\limits_{N_1<u<N_2}e(\frac{k!z_1}{(k-A-1)!}\prod\limits_{i\leq A+1}h_iu_1^{k-A}+f_{A+1}({u_1,h_1,...,h_A,h_{A+1}}))\bigg|\\
\ll&\sum\limits_{h_{A+1}<\frac{K_{A+1}}{(k-A)N^{k-A-1}},h_A\in\text{Bad}(A)}\bigg|\sum\limits_{N_1<u<N_2}e(\frac{k!z_1}{(k-A-1)!}\prod\limits_{i\leq A+1}h_iu_1^{k-A}+f_{A+1}({u_1,h_1,...,h_A,h_{A+1}}))\bigg|\\
&+\frac{K_1}{kN^{k-A-1+\frac{\theta}{4(k-A+1)}}}
\end{align*}
Here we loss the exponent for $1-\theta$ to make the main term clear, and it would not influent our final conclusion.
\begin{align*}
&|\sum\limits_{{N_1}<u<{N_2}}{e(\frac{k!z_1}{(k-A)!}\prod\limits_{i\leq A}h_iu^{k-A}+f_A({u_1,h_1,...,h_A})})|^{2^{n-A}y}\\
\ll&\bigg(\bigg|\sum\limits_{h_{A+1}>\frac{K_{A+1}}{(k-A)N^{k-A-1}}}\sum\limits_{N_1<u<N_2}e(\frac{k!z_1}{(k-A-1)!}\prod\limits_{i\leq A+1}h_iu_1^{k-A}+f_{A+1}({u_1,h_1,...,h_A,h_{A+1}}))\bigg|\\
+&\frac{K_1}{kN^{k-A-1+\frac{\theta}{4(k-A+1)}}}+\bigg|\sum\limits_{h_{A+1}<\frac{K_{A+1}}{(k-A)N^{k-A-1}},h_A\in\text{Bad}(A)}e(\frac{k!z_1}{(k-A-1)!}\prod\limits_{i\leq A+1}h_iu_1^{k-A}+f_{A+1}({u_1,h_1,...,h_A,h_{A+1}}))\bigg|
\end{align*}
For another part, we know
\begin{align*}
&\bigg|\sum\limits_{N_1<u<N_2}\sum\limits_{h_{A+1}>\frac{K_{A+1}}{(k-A)N^{k-A-1}}}e(\frac{k!z_1}{(k-A-2)!}\prod\limits_{i\leq A+1}h_iu_1^{k-A}+f_{A+1}({u_1,h_1,...,h_A,h_{A+1}}))\bigg|\\
\leq&\sum\limits_{h_{A+1}<\frac{K_{A+1}}{(k-A)N^{k-A-1}}}\bigg|\sum\limits_{N_1<u<N_2}e(\frac{k!z_1}{(k-A-2)!}\prod\limits_{i\leq A+1}h_iu_1^{k-A}+f_{A+1}({u_1,h_1,...,h_A,h_{A+1}}))\bigg|
\end{align*}
Thus we have
\begin{align*}
|\sum\limits_{{N_1}<u<{N_2}}{e(\frac{k!z_1}{(k-A)!}\prod\limits_{i\leq A}h_iu^{k-A}+f_A({u_1,h_1,...,h_A}))})|^{2^{n-A}y}
\leq\max\left\{G_1,G_2\right\}
\end{align*}
where
\begin{align*}
G_1=&\bigg(\frac{K_1}{kN^{k-A-1+\frac{\theta}{4(k-A+1)}}}\bigg)^{2^{n-A-1}y}\\
G_2=&\bigg(\sum\limits_{h_{A+1}>\frac{K_{A+1}}{(k-A)N^{k-A-1}}}\bigg|\sum\limits_{N_1<u<N_2}e(\frac{k!z_1}{(k-A-2)!}\prod\limits_{i\leq A+1}h_iu_1^{k-A}+f_{A+1}({u_1,h_1,...,h_A,h_{A+1}}))\bigg|\bigg)^{2^{n-A-1}y}\\
G_3=&\bigg(\sum\limits_{h_{A+1}<\frac{K_{A+1}}{(k-A)N^{k-A-1}},h_A\in\text{Bad}(A)}\bigg|\sum\limits_{N_1<u<N_2}e(\frac{k!z_1}{(k-A-1)!}\prod\limits_{i\leq A+1}h_iu_1^{k-A}+f_{A+1}({u_1,h_1,...,h_A,h_{A+1}}))\bigg|\bigg)^{2^{n-A-1}y}
\end{align*}
By the norm inequality we know
\begin{align*}
&\bigg(\sum\limits_{h_{A+1}>\frac{K_{A+1}}{(k-A)N^{k-A-1}}}\bigg|\sum\limits_{N_1<u<N_2}e(\frac{k!z_1}{(k-A-1)!}\prod\limits_{i\leq A+1}h_iu_1^{k-A}+f_{A+1}({u_1,h_1,...,h_A,h_{A+1}}))\bigg|\bigg)^{2^{n-A-1}y}\\
\leq&\sum\limits_{h_{A+1}>\frac{K_{A+1}}{(k-A)N^{k-A-1}}}\bigg|\sum\limits_{N_1<u<N_2}e(\frac{k!z_1}{(k-A-1)!}\prod\limits_{i\leq A+1}h_iu_1^{k-A}+f_{A+1}({u_1,h_1,...,h_A,h_{A+1}}))\bigg|^{2^{n-A-1}y}\\
&\bigg(2N^\theta-\frac{K_{A+1}}{(k-A)N^{k-A-1}}\bigg)^{2^{n-A-1}y-1}
\end{align*}
The same as $G_1$, we know
\begin{align*}
&\bigg(\sum\limits_{h_{A+1}<\frac{K_{A+1}}{(k-A)N^{k-A-1}},h_A\in\text{Bad}(A)}\bigg|\sum\limits_{N_1<u<N_2}e(\frac{k!z_1}{(k-A-1)!}\prod\limits_{i\leq A+1}h_iu_1^{k-A}+f_{A+1}({u_1,h_1,...,h_A,h_{A+1}}))\bigg|\bigg)^{2^{n-A-1}y}\\
\leq&\sum\limits_{h_{A+1}<\frac{K_{A+1}}{(k-A)N^{k-A-1}},h_A\in\text{Bad}(A)}\bigg|\sum\limits_{N_1<u<N_2}e(\frac{k!z_1}{(k-A-1)!}\prod\limits_{i\leq A+1}h_iu_1^{k-A}+f_{A+1}({u_1,h_1,...,h_A,h_{A+1}}))\bigg|^{2^{n-A-1}y}\\
&N^{1-2^{n-A-1}y}\bigg(2N^\theta-\frac{K_{A+1}}{(k-A)N^{k-A-1}}\bigg)^{2^{n-A-1}y-1}
\end{align*}
Here we find three cases here:\\
\emph{case 1:}
\begin{align*}
&|\sum\limits_{{N_1}<u<{N_2}}{e(\frac{k!z_1}{(k-A)!}\prod\limits_{i\leq A}h_iu^{k-A}+f_A({u_1,h_1,...,h_A})}|^{2^{n-A}y}\\
\ll&\sum\limits_{h_{A+1}<\frac{K_{A+1}}{(k-A)N^{k-A-1}}}|S^*(\frac{k!z_1}{(k-A-1)!}\prod\limits_{i\leq A+1}h_i;n-A-1,k-A-1)|\bigg(2N^\theta-\frac{K_{A+1}}{(k-A)N^{k-A-1}}\bigg)^{2^{n-A-1}y-1}
\end{align*}
\emph{case 2:}
\begin{align*}
|\sum\limits_{{N_1}<u<{N_2}}{e(\frac{k!z_1}{(k-A)!}\prod\limits_{i\leq A}h_iu^{k-A}+f_A({u_1,h_1,...,h_A})})|^{2^{n-A}y}
\ll\bigg(\frac{K_1}{kN^{k-A-1+\frac{\theta}{4(k-A+1)}}}\bigg)^{2^{n-A-1}y}
\end{align*}
\emph{case 3:}
\begin{align*}
&|\sum\limits_{{N_1}<u<{N_2}}{e(\frac{k!z_1}{(k-A)!}\prod\limits_{i\leq A}h_iu^{k-A}+f_A({u_1,h_1,...,h_A})})|^{2^{n-A}y}\\
\ll&\sum\limits_{h_{A+1}<\frac{K_{A+1}}{(k-A)N^{k-A-1}},h_A\in\text{Bad}(A)}|S^*(\frac{k!z_1}{(k-A-1)!}\prod\limits_{i\leq A+1}h_i;n-A-1,k-A-1)|\\
&N^{1-2^{n-A-1}y}\bigg(2N^\theta-\frac{K_{A+1}}{(k-A)N^{k-A-1}}\bigg)^{2^{n-A-1}y-1}
\end{align*}
What we will do next is to ensure the frequency of these two cases.
We called this step "Efficienct Partition of Weyl Difference". Base on this method, we partition the integral of exponential sum into 3 parts. The integral $I$ is which can kept differential inequality case 1 after $n$ steps; the integral $II$ is the one which only can kept the inequality until $j\geq A$, when $j>A$ it may turns to case II. When we discuss the case I and case II, they will produce the sum of case III naturally. We will calculate the contribution of that in Section 5.
\section{The contribution of case I and case II}
In sum I we just need to calculate case I. In this situation, we know
\[
|S^*(z_A;n-A,k-A)|\ll\sum\limits_{|h_1|\geq\frac{K_1}{kN^{k-1}}}|S^*(z_{A+1};n-A-1,k-A-1)|\bigg(2N^\theta-\frac{K_{A+1}}{(k-A)N^{k-A-1}}\bigg)^{2^{n-A-1}y-1}
\]
By using this step $n-A$ times, we know
\begin{align*}
|S^*(z_1;n,k)|\ll&\sum\limits_{|h_1|\geq\frac{K_1}{kN^{k-1}}}...\sum\limits_{|h_n|\geq\frac{K_n}{(k-n+1)N^{k-n}}}|S^*(\frac{k!z_1}{(k-n)!}\prod\limits_{i\leq n}h_i;0,k-n)|\prod\limits_{j\leq n}\bigg(2N^{\theta}-\frac{K_j}{(k-j+1)N^{k-j}}\bigg)^{2^{n-j}y-1}\\
=&\sum\limits_{|h_1|\geq\frac{K_1}{kN^{k-1}}}...\sum\limits_{|h_n|\geq\frac{K_n}{(k-n+1)N^{k-n}}}|\sum\limits_{{N_1}<u<{N_2}}{e(\frac{k!z_1}{(k-n)!}\prod\limits_{i\leq n}h_iu^{k-n}+f_n({u_1,h_1,...,h_n})}|^{y}\\
&\prod\limits_{j\leq n}\bigg(2N^{\theta}-\frac{K_j}{(k-j+1)N^{k-j}}\bigg)^{2^{n-j}y-1}
\end{align*}
Hence we have
\[
|S^*(z_1;n,k)|\ll N^{\theta y(1-\frac{1}{4(k+1)})+\epsilon}\prod\limits_{j\leq n}\bigg(2N^{\theta}-\frac{K_j}{(k-j+1)N^{k-j}}\bigg)^{2^{n-j}y}
\]
Here we use lemma 2.2. From the structure of the expression, we would better to set
\[
K_j=
(k-j+1)N^{k-j}(2N^\theta-N^{\delta_j})(\delta_j<\theta,j>A)
\]
Now we need to calculate the proportion of case I and case II. We firstly find the condition that case II happened in $j-th$ step. Ignoring some constants, for those $j\geq A$,
\[
\sum\limits_{h_j\geq\frac{K_j}{(k-j+1)N^{k-j}}}\bigg|\sum\limits_{u_j}e(\frac{k!z_1}{(k-j)!}\prod\limits_{i\leq n}h_iu^{k-j}+f_j)\bigg|^{2^{n-j}y}N^{\delta_j(2^{n-j}y-1)}\geq\frac{C_jK_j^{2^{n-j}y}}{N^{(k-j-1+\frac{1}{4(k-j+1)}\theta)2^{n-j}y}}
\]
What we need is
\[
\min\limits_{h_j}\bigg|\sum\limits_{u_j}e(\frac{kz_1}{k-j}\prod\limits_{i\leq n}h_iu^{k-j}+f_j)\bigg|^{2^{n-j}y}\geq \frac{C_jK_j^{2^{n-j}y-1}N^{(1-\frac{1}{k-j+1})2^{n-j}y\theta}}{N^{(k-j)(2^{n-j}y-1)+\delta_j(2^{n-j}y-1)}}\asymp N^{(\theta-\delta_j)(2^{n-j}y-1)+(1-\frac{1}{4(k-j+1)})2^{n-j}y\theta}
\]
It is equal to find the smallest $\delta_j>0$, making 
\[
\bigg|\sum\limits_{u_j}e(\frac{k!z_1}{(k-j)!}\prod\limits_{i\leq n}h_iu^{k-j}+f_j)\bigg|\geq C_jN^{\theta-\frac{\theta}{4(k-j+1)}-(\theta-\delta_j)(1-\frac{1}{2^{n-j}y})}
\]
Let $T_i(i\leq k)$ are some monomials, and we set
\[
S_j(\alpha)=\sum\limits_{N-N^\theta\leq u\leq N+N^\theta}e(T_k(\alpha)),j=1,2,...,j
\]
Then to find the measure of $\alpha$ that make the inequality hold true for first time, we need this lemma:
\begin{lemma}
If we have
\[
\int_{(0,1]}|\sum\limits_{N_1<u<N_2}e(\alpha u^k)|^m\mathrm{d}\alpha\leq N^{\theta(m-\phi(m))}
\]
Then we have the measure inequality
\[
\mathfrak{L}\Big(\alpha:|S_k(\alpha)|\leq N^{h+\theta/2}\Big)\leq N^{\theta(m-\phi(m))-hm}
\]
\end{lemma}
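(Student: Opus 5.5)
The plan is to compare the right-hand side with the trivial bound $\mathfrak{L}(\cdot)\le 1$ and to split according to the size of the parameter $h$. The one input I will use from outside is the trivial pointwise bound $|S_k(\alpha)|\le 2N^\theta+1$, which holds because the sum has at most $2N^\theta+1$ terms. I will read $S_k(\alpha)$ as the sum $\sum_{N_1<u<N_2}e(\alpha u^k)$ appearing in the hypothesis. I will also assume $0\le\phi(m)\le m$, which is the range in which the hypothesis is meaningful.

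\textbf{Range $hm\le\theta(m-\phi(m))$.} The exponent $\theta(m-\phi(m))-hm$ is nonnegative, so the right-hand side is at least $1$. Since $\{\alpha:|S_k(\alpha)|\le N^{h+\theta/2}\}\subset(0,1]$, its Lebesgue measure is at most $1$. The inequality therefore holds trivially, and the moment hypothesis is not even needed. This covers every $h\le\theta(1-\phi(m)/m)$, which includes all $h\le 0$.

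\textbf{Range $hm>\theta(m-\phi(m))$, i.e. $h>\theta(1-\phi(m)/m)$.} This is the step I expect to be the real obstacle, and I would check it first. If in addition $h+\theta/2>\theta$, which holds as soon as $\phi(m)<m/2$, then for large $N$ the trivial bound gives $|S_k(\alpha)|\le 2N^\theta+1<N^{h+\theta/2}$ for every $\alpha$. The set in question is then all of $(0,1]$ and has measure $1$, while the right-hand side is $N^{-(\text{positive})}<1$. So in this range the statement as literally worded cannot be derived from the moment bound; it is simply false there. The lower-tail reading therefore only survives in the trivial regime of the previous paragraph.

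Consequently, the honest proof of the lemma as worded consists of two parts. First, establish it for $h\le\theta(1-\phi(m)/m)$ by the trivial argument above. Second, flag that for larger $h$ the inequality sign inside $\mathfrak{L}(\cdot)$ must be reversed to $|S_k(\alpha)|\ge N^{h+\theta/2}$ for the statement to be true and usable later. That reversed form is what Section 4 needs, since there one measures the set of $\alpha$ for which the Weyl sum is \emph{large*} (case II triggered). For the reversed form the route is Markov's inequality applied to $|S_k|^m$ together with the hypothesis:
\[
N^{(h+\theta/2)m}\,\mathfrak{L}\big(|S_k|\ge N^{h+\theta/2}\big)\le N^{\theta(m-\phi(m))}.
\]
This already gives the displayed bound with the extra saving $N^{-\theta m/2}$.
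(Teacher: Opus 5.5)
Your proposal is correct, and it is more accurate than the paper here. The paper's proof uses the same Markov--Chebyshev idea, but in the form
\[
x^m\,\mathfrak{L}\Big(\alpha:\frac{|S_k(\alpha)|}{N^{\theta/2}}\le x\Big)\le\int_{(0,1]}|S(\alpha)|^m\,\mathrm{d}\alpha ,
\]
followed by $x=N^h$. That inequality is not valid. Markov's inequality controls only the superlevel set $\{|S_k|\ge xN^{\theta/2}\}$, on which $|S_k|^m\ge x^mN^{\theta m/2}$. Once the sign is corrected, the paper's argument proves exactly your reversed statement. It even discards the extra saving $N^{-\theta m/2}$ that you keep. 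It does not prove the lemma as worded, which, as you show, is false outside the trivial range $hm\le\theta(m-\phi(m))$.

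Two small additions would make your counterexample airtight.

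\emph{The hypothesis must actually hold in your example.} To call the statement false you need an instance where it does. Since the summands $u$ are positive, the $u^k$ are distinct, so $\int_0^1|S|^2\,\mathrm{d}\alpha=M$, the number of summands. Hence
\[
\int_0^1|S|^m\,\mathrm{d}\alpha\le M^{m-2}\cdot M\le(2N^\theta+1)^{m-1}\le N^{\theta(m-1/2)}
\]
for large $N$. So $\phi(m)=1/2$ is admissible, and taking $h=\theta$ gives a set of measure $1$ against a right-hand side of $N^{-\theta/2}$.

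\emph{Your side condition is automatic.} For $m\ge2$, H\"older gives $\int_0^1|S|^m\,\mathrm{d}\alpha\ge M^{m/2}\ge(2N^\theta-1)^{m/2}>N^{\theta m/2}$. So the hypothesis itself forces $\theta(m-\phi(m))>\theta m/2$, that is, $\phi(m)<m/2$.

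Consequently, whenever the hypothesis holds and $hm>\theta(m-\phi(m))$, the stated inequality fails for all sufficiently large $N$. Only the upper-tail version is a genuine consequence of the moment bound.
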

\begin{proof}
We know
\[
\int_{(0,1]}|S(\alpha)|^m\mathrm{d}\alpha\leq N^{\theta(m-\phi(m))}
\]
So we know
\[
x^m\mathfrak{L}\Big(\alpha:\frac{|S_k(\alpha)|}{N^{\theta/2}}\leq x\Big)\leq\int_{(0,1]}|S(\alpha)|^m\mathrm{d}\alpha\leq N^{\theta(m-\phi(m))}
\]
which shows
\[
\mathfrak{L}\Big(\alpha:\frac{|S_k(\alpha)|}{N^{\theta/2}}\leq x\Big)\leq N^{\theta(m-\phi(m))-hm}
\]
then we complete the proof.
\end{proof}
To find the exact $\alpha$, our calculation may hold a self-reference, and we will use this character in Section 6. To calculate the probability of our difference stop in step II, we need another lemma.
\begin{lemma}
Let $\alpha$ is a uniformly distributed random variable on $(0,1]$, $C_j$ are some positive integers, then we know
\[
S_j(C_j\alpha)=\sum\limits_{N_1<u<N_2}e(\alpha(C_ju^j+T_j))(j=k-n,...,k-A)
\]
are some independent random variable rely on $\alpha$.
\end{lemma}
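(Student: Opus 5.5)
The plan is to read the statement as asserting that suitable positive integers $C_j$ exist. I would choose them lacunary, and then prove independence by showing that every mixed moment of the random variables $S_j(C_j\alpha)$, $j=k-n,\dots,k-A$, factors exactly. Since each $S_j$ is bounded by $2N^\theta+1$, the joint law of the vector $\big(S_{k-n}(C_{k-n}\alpha),\dots,S_{k-A}(C_{k-A}\alpha)\big)$ is compactly supported in $\mathbb{C}^{n-A+1}$. Such a law is determined by its mixed moments
\[
\int_{(0,1]}\prod_{j}S_j(C_j\alpha)^{a_j}\,\overline{S_j(C_j\alpha)}^{\,b_j}\,\mathrm{d}\alpha ,\qquad a_j,b_j\in\mathbb{Z}_{\ge 0}.
\]
This follows from Stone--Weierstrass, since polynomials in $z,\bar z$ are dense in continuous functions on a compact set. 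So it suffices to prove that each such integral equals $\prod_j\int_{(0,1]}S_j(C_j\alpha)^{a_j}\overline{S_j(C_j\alpha)}^{\,b_j}\,\mathrm{d}\alpha$. Independence of the $S_j$ is exactly this factorization of all joint moments.

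\textbf{Step 1: expand the moments.} Each $S_j(C_j\alpha)=\sum_{N_1<u<N_2}e(\alpha\,\lambda_j(u))$, with $\lambda_j(u)=C_ju^j+T_j(u)$, is a trigonometric polynomial in $\alpha$ with integer frequencies. I take the $T_j$ to be integer-valued, as they are for the monomials in the differenced sums of Section 3. Expanding the product, the mixed moment counts integer tuples with
\[
\sum_j\Big(\sum_{r\le a_j}\lambda_j(u_{j,r})-\sum_{s\le b_j}\lambda_j(v_{j,s})\Big)=0 .
\]
Orthogonality of $e(\alpha t)$ on $(0,1]$ is used here.

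\textbf{Step 2: separate the scales.} Write $D_j$ for the $j$-th inner bracket. For fixed $a_j,b_j$, the differences $D_j$ are integers, and they lie in a set of diameter at most $(a_j+b_j)\max_u|\lambda_j(u)|\ll (a_j+b_j)C_jN^j$. I would choose the $C_j$ so that two conditions hold.
\begin{itemize}
\item[(i)] Each $D_j$ is divisible by a modulus $M_j$ with $M_{j+1}$ a multiple of $M_j$. This can be arranged by making $C_j$ and the coefficients of $T_j$ multiples of $M_j$.
\item[(ii)] $M_{j+1}$ exceeds $2\sum_{i\le j}(a_i+b_i)C_iN^i$.
\end{itemize}
Reducing the identity $\sum_j D_j=0$ modulo $M_{k-n+1}$ then kills all terms except $D_{k-n}$. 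Together with the size bound, this forces $D_{k-n}=0$. Inducting upward, every $D_j=0$ separately. Hence the set of solutions is the Cartesian product of the solution sets for the individual $j$, and the moment factors exactly, as required.

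\textbf{Main obstacle.} Conditions (i)--(ii) are uniform in the moment orders $a_j,b_j$ only if the $C_j$ may grow with them. To handle all moments at once, I would first try an intermediate route: fix a truncation level $R$, obtain factorization for all $a_j+b_j\le R$, and observe that bounded variables are approximated in law by their moments up to order $R$. The $C_j$ would then be chosen depending on $R$, with $R\to\infty$. I expect making this limiting step rigorous with fixed $C_j$ to be the hard part.

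My main attempt at fixed $C_j$ exploits the finite range of the variables. Since $u$ ranges over an interval of length $2N^\theta$, each $S_j$ takes values in a finite set determined by the finitely many residues $\alpha\lambda_j(u)\bmod 1$. One would therefore choose the $C_j$ so that the map $\alpha\mapsto(C_j\alpha \bmod 1)_j$ is equidistributed on the torus. Independence of the coordinates then follows from Weyl's criterion, which reduces to the same frequency-separation count as in Step 2. Whether this equidistribution step can be completed with fixed $C_j$ is the point I am least sure of.
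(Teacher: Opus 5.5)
There is a genuine gap, exactly at the step you flag as the main obstacle, and it cannot be closed: the statement is false for every choice of positive integers $C_j$.

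Fix two indices $i\ne j$, both $\ge 1$. Under your integrality assumption, $S_j(C_j\alpha)=\sum_t c_t\,e(t\alpha)$ with $c_t=\#\{u:\ C_ju^j+T_j(u)=t\}\ge 0$. The map $u\mapsto C_ju^j+T_j(u)$ is a nonzero polynomial (with $T_j$ of lower degree, as in the differenced sums), so it cannot vanish at all $\asymp N^\theta$ points $u$. Hence $S_j(C_j\alpha)$ is a nonconstant trigonometric polynomial. It therefore takes each value at only finitely many $\alpha\in[0,1]$, and its law is non-atomic.

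Now consider the pair $\big(S_i(C_i\alpha),S_j(C_j\alpha)\big)$. It is supported on the compact curve $\Gamma=\{(S_i(C_i\alpha),S_j(C_j\alpha)):\alpha\in[0,1]\}$, and each fibre of $\Gamma$ over the first coordinate is finite. By Fubini, the product of the two marginal laws gives $\Gamma$ measure $0$, whereas the joint law gives it measure $1$. So no choice of the $C_j$ yields independence.

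One-term sums already show this concretely. Since $e(C_i\alpha)^{C_j}=e(C_j\alpha)^{C_i}$, we get $\mathbf{E}\big[e(C_i\alpha)^{C_j}\,\overline{e(C_j\alpha)}^{\,C_i}\big]=1$, while the product of the separate expectations is $0$. This is consistent with your Step~2: with fixed $C_j$, condition (ii) fails once the moment orders are large. By the argument above, some high-order mixed moment must then fail to factor, so the limit $R\to\infty$ cannot be taken.

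Your fallback fails as well. The map $\alpha\mapsto(C_j\alpha \bmod 1)_j$ traces a closed line on the torus, not an equidistributed one. Weyl's criterion is violated by the frequency vector with entry $C_{j'}$ in coordinate $j$, entry $-C_j$ in coordinate $j'$, and zeros elsewhere, since then the integrand is identically $1$. Nor does $S_j$ take finitely many values: its range is a continuum.

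The paper's one-line proof does not supply the missing idea either. It checks only the unconjugated moments $\mathbf{E}[S_i^{k_1}S_j^{k_2}]$, which vanish for trivial reasons (e.g.\ when all frequencies are positive). These moments do not determine the joint law; the dependence lives in the conjugated moments, as in the example above.

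What your Step~2 correctly proves is factorization of all mixed moments of total order at most $R$, when the $C_j$ are chosen depending on $R$ and $N$. That is the kind of statement that can actually hold. Any application would have to be rephrased to use only this, for instance via Markov-type bounds for joint upper-tail events through a single factorized moment. It cannot support the product formula for probabilities of events like $|S_j|\le N^h$ that the paper uses afterwards.
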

\begin{proof}
In this situation, the variable is actually just rely on $\alpha$, which means we just need to prove
\[
\textbf{E}[S_i^{k_1}(C_i\alpha)S_j^{k_2}(C_j\alpha)]=\textbf{E}[S_i^{k_1}(C_i\alpha)]\textbf{E}[S_j^{k_2}(C_j\alpha)]
\]
and we know all of them are equal to $0$, so we complete the proof.
\end{proof}
Therefore we can calculate the measure of $\alpha$ of integral $II$ as follows, here we calculate the probability that the difference end in step $l$.
\[
\prod\limits_{A\leq j\leq l}P\Big(\alpha:\Big|\sum\limits_{u}e(C_{n-j}\alpha u^{n-j+f_j})\Big|\leq N^{h}\Big)P\Big(\alpha:\Big|\sum\limits_{u}e(C_{n-l-1}\alpha u^{n-{l}-1}+f_j)\Big|>N^{h}\Big)
\asymp N^{\theta(m-\phi(m))-hm}
\]
where
\begin{align*}
&h=\theta-\frac{\theta}{4(k-j+1)}-(\theta-\delta_j)(1-\frac{1}{2^{n-j}y})\\
&C_k=\frac{k!z_1}{(k-j)!}\prod\limits_{i\leq n}h_i
\end{align*}
Then we can choose $\delta_j=\lambda_j\theta$. To satisfy the condition in lemma 3.2, we need $\delta_j-\theta=o(1)$, which means
\[
\frac{\theta}{4(k-j+1)}+(\theta-\delta_j)(1-\frac{1}{2^{n-j}y})=o(1)
\]
which means
\[
\lambda_j=1-\frac{m}{(k-j+1)(m-2^j)}=\frac{4k-4j+3}{4(k-j+1)}(1+C\frac{2^j}{m})+o(1)
\]
and $|C|<1$. Therefore it is safe to choose
\[
\lambda_j=\frac{4k-4j+3}{4(k-j+1)}
\]
and we set
\[
\lambda=\max\limits_{j}\lambda_j=\frac{4k-4A+3}{4(k-A+1)}
\]
In this condition, we know
\[
p_l\ll N^{\theta(2^l\lambda_l-\phi(m))}=o(N^{\theta(\lambda2^l-\phi(m))})
\]
To calculate the exact bound of sum $I$, we have
\[
2N^{\theta}-\frac{K_j}{(k-j+1)N^{k-j}}\ll N^{\lambda\theta}
\]
On the other hand, we have
\[
\frac{3}{2}N^{\theta}-\frac{K_j(k-j)}{kN^{k-j-\theta}}=\frac{3}{2}N^{\theta}+O(N^{\theta-k-j})
\]
Now for the sum $I$, which is only consisted by case 1, we have
\[
\int_{z:I}|\sum\limits_{{N_1}<u<{N_2}}{e(z_1u^k)}|^{U}\mathrm{d}z\ll N^{((2^n-2^{n-A+1})y+\lambda y(2^{n-A+1}-1)+\frac{(4k+3)y}{4(k+1)})\theta+\epsilon}
\]
where $\frac{m}{2^n}-1<y<\frac{m}{2^n}$ and we have
\[
\int_{z:I}|\sum\limits_{{N_1}<u<{N_2}}{e(z_1u^k)}|^{m}\mathrm{d}z\ll N^{\theta(1+y^{-1})((2^n-(1-\lambda)2^{n-A+1})y+(\frac{4k+3}{4(k+1)}-\lambda)y)+\epsilon}
\]
For the sum $II$, we calculate it as we do in sum $I$. With the probability $p_l$, the difference process terminates at step $l$ and falls into case 2. We have
\begin{align*}
|S^*(z_1;n,k)|\ll&\sum\limits_{|h_1|\geq\frac{K_1}{kN^{k-1}}}...\sum\limits_{|h_l|\geq\frac{K_l}{(k-l+1)N^{k-l}}}|S^*(\frac{k!z_1}{(k-l)!}\prod\limits_{i\leq l}h_i;n-l,k-l)|\\
&\prod\limits_{j\leq A}N^{(2^{n-j}y-1)\theta}\prod\limits_{A+1\leq j\leq l}N^{(2^{n-j}y-1)\lambda\theta}\\
=&\sum\limits_{|h_1|\geq\frac{K_1}{kN^{k-1}}}...\sum\limits_{|h_l|\geq\frac{K_l}{(k-l+1)N^{k-l}}}|\sum\limits_{{N_1}<u<{N_2}}{e(\frac{k!z_1}{(k-l)!}\prod\limits_{i\leq l}h_iu^{k-l}+f_l)}|^{2^{n-l}y}\\
&\prod\limits_{j\leq A}N^{(2^{n-j}y-1)\theta}\prod\limits_{A+1\leq j\leq l}N^{(2^{n-j}y-1)\lambda\theta}\\
\ll&\bigg(\frac{K_l}{kN^{k-l-1+\frac{\theta}{4(k-l+1)}}}\bigg)^{2^{n-l+1}y}\prod\limits_{j\leq A}N^{2^{n-j}y\theta}\prod\limits_{A+1\leq j\leq l}N^{2^{n-j}y\lambda\theta}\\
\ll&N^{\theta(m-(1-\lambda)2^{n-A}y-2^{n-l}y+\frac{2^{n-l-2}y}{k-l+1})}
\end{align*}
where $l\geq A$. Here we use the condition that our differential cannot be kept. For the integral, We have 
\begin{align*}
&\int_{z:II}|\sum\limits_{{N_1}<u<{N_2}}{e(zu^k)}|^{m}\mathrm{d}z\\
\ll&\sum\limits_{A\leq l\leq n}p_lN^{\theta(m-(1-\lambda)2^{n-A}y-2^{n-l}y+\frac{2^{n-l-2}y}{k-l+1})(1+\frac{1}{y})}\\
\ll&\sum\limits_{A\leq l\leq n}N^{\theta(m-(1-\lambda)2^{n-A}y-2^{n-l}y+\frac{2^{n-l-2}y}{k-l+1})(1+\frac{1}{y})+\theta(2^l\lambda_l-\phi(m))}\\
\ll_n&\max\limits_lN^{\theta(m-(1-\lambda)2^{n-A}y-2^{n-l}y+\frac{2^{n-l-2}y}{k-l+1})(1+\frac{1}{y})+\theta(2^l\lambda-\phi(m))}
\end{align*}
where $\epsilon$ is any given positive number.
\section{The contribution of case III}
Now we put our sight to the case III, the bad $C\alpha$. We know there are two way to produce the bad $C\alpha$: the difference of case I(the last step) and case II; the difference of case III. We know when we do the differencing step, case III will contribute to the case II, but the order of the count of case III is less than case II naturally, so that when it produce some $C\alpha$ not bad, and the contribution can also be ignored. Therefore we just need to calculate the sums in case III which is never turned to case I or case II. 
According to the discussion, we know
\begin{align*}
&\sum\limits_{h_{l+1}<\frac{K_{l+1}}{(k-l)N^{k-l-2}},h_l\in\text{Bad}(A)}|S^*(\frac{k!z_1}{(k-l-1)!}\prod\limits_{i\leq l+1}h_i;n-l-1,k-l-1)|\\
\ll&\sum\limits_{\substack{\forall j\geq l:h_{j+1}<\frac{K_{j+1}}{(k-j)N^{k-j-1}}\\h_n\in\text{Bad}(n)}}|S^*(\frac{k!z_1}{(k-n)!}
\prod\limits_{i\leq n}h_i;0,k-n)|\\
&\prod\limits_{l\leq j\leq n}N^{1-2^{n-j-1}y}\bigg(2N^{\theta}-\frac{K_j}{(k-j+1)N^{k-j}}\bigg)^{2^{n-j}y}+o\bigg(\bigg(\frac{K_l}{kN^{k-l-1+\frac{\theta}{4(k-l+1)}}}\bigg)^{2^{n-l+1}y}\bigg)\\
\ll&N^{\theta y}\prod\limits_{l\leq j\leq n}N^{1-2^{n-j-1}y}\bigg(2N^{\theta}-\frac{K_j}{(k-j+1)N^{k-j}}\bigg)^{2^{n-j}y}+o\bigg(\bigg(\frac{K_l}{kN^{k-l-1+\frac{\theta}{4(k-l+1)}}}\bigg)^{2^{n-l+1}y}\bigg)
\end{align*}
Hence the contribution is rely on the first expression. Our main goal is to prove
\[
N^{\theta y}\prod\limits_{l\leq j\leq n}N^{1-2^{n-j-1}y}\bigg(2N^{\theta}-\frac{K_j}{(k-j+1)N^{k-j}}\bigg)^{2^{n-j}y}=o\bigg(\bigg(\frac{K_l}{kN^{k-l-1+\frac{\theta}{4(k-l+1)}}}\bigg)^{2^{n-l+1}y}\bigg)
\]
when $l>A$, we have
\[
2N^{\theta}-\frac{K_j}{(k-j+1)N^{k-j}\theta}\sim N^\frac{k-j}{k-j+1}\leq N^{\lambda\theta}
\]
so we know
\begin{align*}
&\max\limits_{l}\prod\limits_{l\leq j\leq n}N^{1-2^{n-j}y}\bigg(2N^{\theta}-\frac{K_j}{(k-j+1)N^{k-j}}\bigg)^{2^{n-j-1}y}\bigg(\frac{K_l}{kN^{k-l-1+\frac{\theta}{4(k-l+1)}}}\bigg)^{-2^{n-l+1}y}\\
\leq&\exp(\log N(y\theta+\max\limits_l(2^{n-l+1}-1)(\lambda-3y/2)+n-l))=o(1)
\end{align*}
Calculating the exponent left, we know the main term of case III in $l-th$ step is
\[
\sum\limits_{h_{l+1}<\frac{K_{l+1}}{(k-l)N^{k-l-1}},h_l\in\text{Bad}(A)}|S^*(\frac{k!z_1}{(k-l-1)!}\prod\limits_{i\leq l+1}h_i;n-l-1,k-l-1)|=o\bigg(\bigg(\frac{K_l}{kN^{k-l-1+\frac{\theta}{4(k-l+1)}}}\bigg)^{2^{n-l-1}y}\bigg)
\]
Hence we know the contribution of case III can be ignored.
\section{The final conclusion}
Putting these integrals together, we have
\[
\int_{\mathfrak{m}}\bigg|\sum\limits_{{N_1}<u<{N_2}}{e(zu^k)}\bigg|^{m}\mathrm{d}z\ll N^{\beta_1}+N^{\beta_2}
\]
where
\begin{align*}
\beta_1=&\theta(y+1)(2^n-(1-\lambda)2^{n-A+1}+\frac{4k+3}{4(k+1)}-\lambda)\\
\beta_2=&\theta(m-(1-\lambda)2^{n-A}y-2^{n-l}y+\frac{2^{n-l-2}y}{k-l+1})(1+\frac{1}{y})+\theta(2^l\lambda-\phi(m))
\end{align*}
and we have the lemma follows:
\begin{lemma}
Setting
\[
G^*=\inf\limits_{2^n(y+1)>m>2^ny}\inf\limits_{A<n,\phi(m)\leq m-G^*}\max\limits_{l}\max\left\{\beta_1,\beta_2\right\}
\]
when there exist an $\epsilon>0$, such that $k<m^{1/2-\epsilon}$, we have
\[
\frac{G^*}{m}=1-\frac{1}{2k+2}+o(1)
\]
\end{lemma}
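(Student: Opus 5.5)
The lemma is a parameter optimization, so I would treat it as two matching bounds on $G^*/m$. For the upper bound I choose $n$, $y$, $A$ and $\lambda$ explicitly and check that $\max_l\max\{\beta_1,\beta_2\}\le \theta m(1-\frac{1}{2k+2}+o(1))$. For the lower bound I argue that no admissible choice does better, because of the self-referential constraint $\phi(m)\le m-G^*$. Throughout I normalize by $\theta m$ and write $m=2^ny+r$ with $0\le r<2^n$, as in the definition of $G^*$.

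\emph{Choice of parameters.} I take $y$ of size about $m^{1/2}$ and $2^n\asymp m^{1/2}$. Then the factors $(1+1/y)$ and $(y+1)$ in $\beta_1,\beta_2$ contribute only $O(m^{-1/2})$ relative error. I take $A=n-O(1)$, so that $\lambda=\frac{4k-4A+3}{4(k-A+1)}$ is a fixed number below $1$. With this choice, $1-\lambda=\frac{1}{4(k-A+1)}$.

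\emph{Bounding $\beta_1$.} Substituting into $\beta_1$, the saving relative to $\theta m$ is $(1-\lambda)2^{n-A+1}y+(\lambda-\frac{4k+3}{4(k+1)})y$. I optimize over $n-A$ and pick $k-A+1$ so that this saving equals $\frac{m}{2k+2}(1+o(1))$. The hypothesis $k<m^{1/2-\epsilon}$ is exactly what makes this possible. It lets $2^{n-A}$ be chosen comparable to $k$ while keeping $y\ge m^{\epsilon}$, so the $1/y$ losses are $o(1)$.

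\emph{Bounding $\beta_2$ and closing the fixed point.} In $\beta_2$ the term $\theta(2^l\lambda-\phi(m))$ is controlled by the self-consistency: I put $\phi(m)=m-G^*$ into $\beta_2$. This reduces $\beta_2\le G^*$ to an inequality of the form
\[
m-(1-\lambda)2^{n-A}y-2^{n-l}y+\frac{2^{n-l-2}y}{k-l+1}+2^l\lambda-(m-G^*)\le G^*(1+o(1)),
\]
uniformly in $A\le l\le n$. The left side is maximized at an endpoint $l=A$ or $l=n$. At $l=n$ the $2^n\lambda$ term is $O(m^{1/2})=o(m)$. At $l=A$ the negative terms dominate. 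So with $G^*=m(1-\frac{1}{2k+2})(1+o(1))$ the inequality holds, and I conclude $G^*/m\le 1-\frac{1}{2k+2}+o(1)$.

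\emph{Lower bound.} For the reverse inequality I show that for every admissible $(n,y,A)$, already $\beta_1\ge\theta m(1-\frac{1}{2k+2}-o(1))$. The saving $(1-\lambda)2^{n-A+1}y$ is at most $\frac{2^{n-A+1}}{4(k-A+1)}y$. Since $k-A+1\ge n-A+1$ is forced by $A<n\le k$, maximizing this over $n-A$ gives at most $\frac{m}{2k+2}$ up to $o(1)$.

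I expect the main obstacle to be the bookkeeping in $\beta_2$. The self-reference $\phi(m)\le m-G^*$ must be resolved as a genuine fixed point, not circularly. I would first verify it at the two extreme $l$ values and then use convexity in $l$ of $-2^{n-l}y+2^l\lambda$ to cover the intermediate values.
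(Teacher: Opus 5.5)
There is a genuine gap in your upper bound: with your parameters the saving in $\beta_1$ is not of order $m/k$. Since $2^ny<m<2^n(y+1)$,
\[
(1-\lambda)2^{n-A+1}y=\frac{2^{n-A+1}y}{4(k-A+1)}\approx\frac{m}{2^{A+1}(k-A+1)},
\]
so the saving is governed by $A$ itself, not by $n-A$, and it decays like $2^{-A}$. Each classical differencing step taken before the refined partition halves it.

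With $2^n\asymp m^{1/2}$ and $A=n-O(1)$, the saving is only $O(y)=O(m^{1/2})$, which is $o(m/k)$ because $k<m^{1/2-\epsilon}$. The other term $(y+1)\bigl(\frac{4k+3}{4(k+1)}-\lambda\bigr)\ge0$ only increases $\beta_1$. So you get no saving of order $1/k$ in $G^*/m$, and that saving is the whole content of the lemma. In the regime $k>\log m$ of Theorem 1.1 the term $\frac{1}{2k+2}$ is itself $o(1)$, so the error must be read as $o(1/k)$, which is also what you set out to prove.

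Your later remark about choosing $2^{n-A}$ comparable to $k$ does not rescue this. It contradicts $A=n-O(1)$, and since $2^A(k-A+1)\ge 2k$ for $1\le A\le k$, every $A\ge1$ caps the saving at about half of $\frac{m}{2k+2}$. The identity $1-\lambda=\frac{1}{4(k-A+1)}$ holds for every $A$, so it is no reason for your choice either.

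The missing step is to take $A=0$, which is what the paper does ($A_0=0$, $y_0\approx m^{1/2-c_1\delta}$, $2^{n_0}\approx m^{1/2+c_1\delta}$). Then $\lambda=\frac{4k+3}{4(k+1)}$, the correction term vanishes, and
\[
\beta_1=\theta(y+1)2^n\Bigl(1-\frac{1}{2k+2}\Bigr)\le\theta(m+2^n)\Bigl(1-\frac{1}{2k+2}\Bigr).
\]
With $A=0$ your balanced choice $y\approx 2^n\approx m^{1/2}$ also works. The hypothesis $k<m^{1/2-\epsilon}$ is what makes the $O(2^n)$ losses from $(y+1)$, $(1+1/y)$ and $2^l\lambda$ negligible against $\frac{m}{2k+2}$; it is not what lets $2^{n-A}\asymp k$.

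Two smaller repairs.

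\emph{Lower bound.} Replacing $k-A+1$ by the smaller $n-A+1$ points the wrong way. It bounds the saving by $\frac{2^{n-A+1}y}{4(n-A+1)}$, whose maximum (at $A=0$) is about $\frac{m}{2(n+1)}\ge\frac{m}{2k+2}$. Use $2^A(k-A+1)\ge k+1$ for $0\le A\le k$ instead. Together with $(y+1)2^n>m$ and the nonnegativity of the correction term, this gives
\[
\beta_1\ge\theta(y+1)2^n\Bigl(1-\frac{1}{2^{A+1}(k-A+1)}\Bigr)\ge\theta m\Bigl(1-\frac{1}{2k+2}\Bigr)
\]
for every admissible $(n,y,A)$.

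\emph{The $\beta_2$ check.} The expression $-2^{n-l}y+2^l\lambda$ is not convex in $l$, since the first term is concave. However, the $l$-dependent part $-2^{n-l}y\bigl(1-\frac{1}{4(k-l+1)}\bigr)+2^l\lambda$ is increasing in $l$, so $l=n$ is the worst case. There, with $A=0$, the term $(1-\lambda)2^ny\approx\frac{m}{4(k+1)}$ dominates $2^n\lambda=O(m^{1/2})$, and your fixed-point reduction goes through.
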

\begin{proof}
First, we prove that we have $G^*=\phi(m)$. There are two cases:\\
\emph{case 1}
\[
G^*=\inf\limits_{2^n(y+1)>m>2^ny}\inf\limits_{A<n,\phi(m)\leq m-G^*}\max\limits_{l}\beta_1
\]
\emph{case 2}
\[
G^*=\inf\limits_{2^n(y+1)>m>2^ny}\inf\limits_{A<n,\phi(m)\leq m-G^*}\max\limits_{l}\beta_2
\]
In \emph{case 1} we know whatever $\phi(m)$ is, it will not influence the final result. In \emph{case 2}, we set $\phi(m)=\theta m-G^*-\epsilon$, where $\epsilon$ is a given number; and we set $\beta_2=\beta_2(A,l;y,n;\phi)$, then we have
\[
G^*=\inf\limits_{2^n(y+1)>m>2^ny}\inf\limits_{A<n,\phi(m)\leq m-G^*}\max\limits_{l}\beta_2(A,l;y,n;\theta m-G^*-\epsilon)
\]
However, it is easy to find $G^*$ is decreasing with $\phi(m)$, so we have
\begin{align*}
&\inf\limits_{2^n(y+1)>m>2^ny}\inf\limits_{A<n,\phi(m)\leq m-G^*}\max\limits_{l}\beta_2(A,l;y,n;\theta m-G^*-\epsilon)\\
\geq&\inf\limits_{2^n(y+1)>m>2^ny}\inf\limits_{A<n,\phi(m)\leq m-G^*}\max\limits_{l}\beta_2(A,l;y,n;\theta m-G^*-\epsilon/2)
\end{align*}
This is in contradiction with the definition of the infimum, so we can set $\phi(m)=G^*$. In \emph{case 1}, we know
\[
\beta_1=\theta(y+1)(2^n-(1-\lambda)2^{n-A+1}+\frac{4k+3}{4(k+1)}-\lambda)
\]
Substitute $\lambda$ into this expression and we have
\begin{align*}
G^*=\theta(y+1)(2^n-\frac{2^{n-A-1}}{k-A+1}+\frac{1}{4(k-A+1)}-\frac{1}{4(k+1)})
\end{align*}
If we want to make our calculation nontrivial, we need to set $2^A=o(y)$ and $2^Ay=o(m)$. In this condition, we have
\[
G^*=\inf\limits_{n,y,A}\theta(1+\frac{1}{y})((1-\frac{1}{2^{A+1}(k+1)})m+O(Ayk^{-2})+O(\frac{m}{2^Ak^2}))
\]
which shows
\[
G^*=\inf\limits_{n,y,A}(m+\frac{m}{y}-\frac{m}{2^{A+1}(k+1)})\theta+O(Ayk^{-2})+O(\frac{m}{2^Ak^2})
\]
We choose $A,y,n$ as follows:
\begin{align*}
A_0=&0\\
y_0=&\lfloor{m^{\frac{1}{2}-c_1{\delta}}}\rfloor+1\\
n_0=&\lfloor\frac{\log{m}}{\log{2}}(\frac{1}{2}+c_1{\delta})\rfloor
\end{align*}
The power of $N$ in $G^*$ can be write as
\[
G^*_0=\theta(m-\frac{m}{2k+2})+m^{1/2+c_1\delta}
\]
so we need $-1<c_2<c_1<1$, and Choosing $\delta=1/2-\epsilon$ is optimal in sense of exact power of $N$. Therefore we find
\[
\frac{G^*}{m}=\inf\limits_{2^n(y+1)>m>2^ny}\inf\limits_{A<n,\phi(m)\leq m-G^*}\max\limits_{l}\beta_1=1-\frac{1}{2k+2}
\]
In this condition, for any $\delta>0$ we know
\begin{align*}
\beta_2=&\theta(m-(1-\lambda)2^{n-A+1}y-2^{n-l}y+\frac{2^{n-l-2}y}{k-l+1})(1+\frac{1}{y})+\theta(2^l\lambda-\phi(m))\\
\leq&\theta(m-\frac{m}{2k+2}+y)(1+\frac{1}{y})+\theta(\frac{m}{4y(k-A+1)}-\frac{m}{2k})\\
\leq&\theta(m-\frac{m}{2k+2})+m^{1/2+c_1\delta}
\end{align*}
setting $\beta_1=\beta_1(A,l;y,n)$, hence we know there is $(A_0,l_0;y_0,n_0)=(A,l;y,n)$, making
\[
\max\left\{\beta_1(A,l;y,n),\beta_2(A,l;y,n;\theta m-G^*)\right\}=G^*_0
\]
It show
\[
\frac{G^*_0}{m}=\frac{G^*}{m}+o(1)
\]
and we complete the proof.
\end{proof}
Hence we have
\[
\int_{\mathfrak{m}}\bigg|\sum\limits_{{N_1}<u<{N_2}}{e(zu^k)}\bigg|^{m}\mathrm{d}z\ll N^{\theta m(\frac{2k+1}{2k+2}+o(1))}
\]
where $\epsilon$ is any given positive number, and $C>0$ is a constants not rely on $m$.
\begin{remark}
We need to point out that our estimate improve the theorem 1.2, the generation of Vinogradov's main conjecture. In some way, we can see it as a better version of the main conjecture in incomplete arc. For an example, from the pointwise estimate of Vinogradov conjecture,
\[
\int_{\mathfrak{m}}\bigg|\sum\limits_{{N_1}<u<{N_2}}{e(zu^k)}\bigg|^{m}\mathrm{d}z\ll N^{\theta m(1-\frac{\gamma}{k^2\log k})}
\]
where $\gamma=\frac{19}{1600}$(By the method in \cite{PP1986}) so we can find our estimate reduces the exponent for almost $\theta m/2(k+1)$ The reason why we can improve the pointwise estimate is that we use various estimate for various $\alpha$, and combined various tools. Moreover, another important reason is that we have ignored some bad $\alpha$ when we set the problem, so we can overtake the pointwise bound.
\end{remark}
\begin{remark}
From the process of our prove, we know
\[
\int_{\mathfrak{m^*}}\bigg|\sum\limits_{{N_1}<u<{N_2}}{e(zu^k)}\bigg|^{m}\mathrm{d}z\leq|\mathfrak{m^*}|N^{\theta m(\frac{2k+1}{2k+2}+m^{-1/2-\delta})}
\]
This conclusion is true for any $\mathfrak{m}$ whose measure is positive, even the measure of $\mathfrak{m}$ is rely on $N,\theta$
\end{remark}
\section{Appendix}
[\emph{The proof of lemma 2.2}]From the definition of $T(K)$, we know
\begin{align*}
T(K)=&\sum\limits_{N-N^{\theta}<x\leq N+N^{\theta}}\sum\limits_{(f(x)+K)^{1/k}\geq y\geq(f(x)-K)^{1/k}}1\\
=&\sum\limits_{N-N^{\theta}<x\leq N+N^{\theta}}\lfloor(f(x)+K)^{1/k}\rfloor-\lfloor(f(x)-K)^{1/k}\rfloor\\
=&\sum\limits_{N-N^{\theta}<x\leq N+N^{\theta}}((f(x)+K)^{1/k}-(f(x)-K)^{1/k})\\
-&\sum\limits_{N-N^{\theta}<x\leq N+N^{\theta}}\big(\left\{(f(x)+K)^{1/k}\right\}-\left\{(f(x)-K)^{1/k}\right\}\big)
\end{align*}
We know
\[
\lfloor(f(x)+K)^{1/k}\rfloor-\lfloor(f(x)-K)^{1/k}\rfloor=(\lfloor(x^k+K)^{1/k}\rfloor-\lfloor(x^k-K)^{1/k}\rfloor)(1+O(\frac{1}{x}))
\]
Setting $N_1=N-N^\theta,N_2=N+N^\theta$, It's obvious that
\begin{align*}
&\sum\limits_{N_1<n\leq N_2}\bigg|\int_n^{n+1}(x^k-K)^{1/k}\mathrm{d}x-((n+1)^k-K)^{1/k}\bigg|\\
=&\sum\limits_{N_1<n\leq N_2}(1-\frac{K}{n^k})^{1/k-1}(1+O(\frac{1}{N}))=2N^{\theta}+O(N^{\theta-1})
\end{align*}
and
\[
\sum\limits_{N_1<n\leq N_2}\bigg|\int_n^{n+1}(x^k+K)^{1/k}\mathrm{d}x-((n+1)^k+K)^{1/k}\bigg|=2N^{\theta}+O(N^{\theta-1})
\]
By Pascal theorem and the dominated convergence theorem, we can exchange the order of integral and sum as follows:
\begin{align*}
\int_{N_1}^{N_2}(x^k-K)^{1/k}\mathrm{d}x=&K^{2/k}\int_{N_1K^{-1/k}}^{N_2K^{-1/k}}(x^k-1)^{1/k}\mathrm{d}x\\
=&K^{2/k}\int_{N_1K^{-1/k}}^{N_2K^{-1/k}}\sum\limits_{n=0}^\infty\binom{1/k}{n}(-1)^nx^{1-kn}\mathrm{d}x\\
=&K^{2/k}\sum\limits_{n=0}^\infty\int_{N_1K^{-1/k}}^{N_2K^{-1/k}}\binom{1/k}{n}(-1)^nx^{1-kn}\mathrm{d}x
\end{align*}
Calculating the integral inside directly, we have
\begin{align*}
\int_{N_1}^{N_2}(x^k-K)^{1/k}\mathrm{d}x=&\sum\limits_{n=0}^\infty\binom{1/k}{n}(-1)^n K^n\frac{(N+N^{\theta})^{2-kn}-(N-N^{\theta})^{2-kn}}{2-kn}\\
=&2N^{1+\theta}-\frac{K}{kN^{k-1-\theta}}+O(\frac{K^{2}}{N^{2k-2-\theta}})
\end{align*}
and the same as what we do above, we have
\[
\int_{N_1}^{N_2}(x^k+K)^{1/k}\mathrm{d}x=2N^{1+\theta}+\frac{K}{kN^{k-1-\theta}}+O(\frac{K^{2}}{N^{2k-2-\theta}})
\]
So we can get the main term:
\[
I_1=\sum\limits_{N-N^{\theta}<x\leq N+N^{\theta}}((x^k+K)^{1/k}-(x^k-K)^{1/k})=\frac{2K}{kN^{k-1-\theta}}+O(\frac{K^{2}}{N^{2k-2-\theta}})+O(N^{\theta-1})
\]
For the second sum, we know
\[
\sum\limits_{N-N^{\theta}<x\leq N+N^{\theta}}\big(\left\{(x^k+K)^{1/k}\right\}-\left\{(x^k-K)^{1/k}\right\}\big)=O(N^\theta)
\]
Thus we know
\[
T(K)=\frac{2K}{kN^{k-1-\theta}}(1+O(\frac{1}{N}))+O(\frac{K^{2}}{N^{2k-2-\theta}})+O(N^\theta)
\]
then we prove lemma 3.1.
\bibliographystyle{plain}
\nocite{*}
\bibliography{References}
\end{document}